%
%
\documentclass[preprint,authoryear,12pt]{elsarticle}

\usepackage{amsmath,amsopn,amstext,amscd,amsfonts,amssymb}
\usepackage{natbib}
\usepackage{graphicx}
\usepackage{verbatim,color}
\usepackage{animate}

\def\tr{\mathop{\rm tr}\nolimits}
\def\re{\mathop{\rm Re}\nolimits}
\def\diag{\mathop{\rm diag}\nolimits}

\def\vec{\mathop{\rm vec}\nolimits}

\def \Vol {\mathop{\rm Vol}\nolimits}
\def\etr{\mathop{\rm etr}\nolimits}
\def\diag{\mathop{\rm diag}\nolimits}

\newcommand {\boldgreektext}[1] {\boldmath
             \(#1\)\unboldmath}
\newcommand {\boldgreek}[1]
             {\mbox{\boldgreektext{#1}}
            }

\newtheorem{theorem}{Theorem}

\newtheorem{lemma}{Lemma}

\newdefinition{definition}{Definition}
\newdefinition{remark}{Remark}
\newproof{proof}{Proof}


\begin{document}
\begin{frontmatter}
\title{Probabilities in multimatrix variate distributions: an application in SARS-CoV-2}

\author[fjc]{Francisco J. Caro-Lopera\corref{cor1}}
\ead{fjcaro@udemedellin.edu.co}

\author[jadg]{Jos\'e A. D\'{\i}az-Garc\'{\i}a}
\ead{jadiaz@cimat.mx}

\cortext[cor1]{Corresponding author}

\address[fjc]{University of Medellin, Faculty of
Basic Sciences, Carrera 87 No.30-65, Medellin, Colombia}

\address[jadg]{Universidad Aut\'onoma de Chihuahua, Facultad de Zootecnia y Ecolog\'{\i}a,
Perif\'erico Francisco R. Almada Km 1, Zootecnia, 33820 Chihuahua, Chihuahua, M\'exico.}

\begin{abstract}
Recently the termed \emph{multimatrix variate distributions} were proposed in \citet{dgcl:24a} as an alternative for univariate and vector variate copulas. The distributions are based on sample probabilistic dependent elliptically countered models and most of them are also invariant under this family of laws. Despite a large of results on matrix variate distributions since the last 70 years, the spherical multimatrix distributions and the associated probabilities on hyper cones can be computable. The multiple probabilities are set in terms of recurrent integrations allowing several matrix computation a feasible task. An application of the emerging probabilities is placed into a dynamic molecular docking in the SARS-CoV-2 main protease. Finally, integration over multimatrix Wishart distribution provides a simplification of a complex kernel integral in elliptical models under real normed division algebras and the solution was applied in elliptical affine shape theory.    
\end{abstract}
\begin{keyword}
 Multimatrix variate distributions, matrix variate elliptical distributions, probabilities on cones, zonal polynomials, real normed division algebras, elliptical affine shape theory.
\MSC 60E05\sep 62E15\sep 15A23\sep 15B52
\end{keyword}
\end{frontmatter}

\section{Introduction}\label{sec:1}
The theory of matrix variate distributions has been developed profusely in the last century. It has emerged as a natural extension of the univariate case, covering all statistical and probabilistic disciplines.

However, the new extensions are becoming more complicated, giving rise to complex calculations that remained impossible for decades. Other generalizations took longer, specifically those related to joint distributions of dependent samples in non-Gaussian models and had usually been addressed by theories such as copulas among others.

Each new extension of the univariate or vector case brings with it not only computability, but increasingly demanding applications in support distributions, estimation and dependence. In response to such goal, alternatives to copula theory have appeared, which not only extend the results of the univariate and vector cases, but also consider all real normed division algebras (real, complex, quaternion, octonion).

Depending on the type of function that describes the relationships of the multiple matrices, two types of distributions have recently been proposed applied to probabilistically dependent samples and under robust elliptically countered  models. These are called multimatricvariate and multimatrix variate distributions and the seminal works appear in \citet{dgcl:22}, \citet{dgcl:24a} and \citet{dgcl:24b}. The multivector variate case can be seen in \citet{dgclpr:22}. The importance of dependent samples, robust distributions beyond Gaussian, general calculations for all real normed division algebras and estimation based on non-independent likelihoods have been studied in detail in the aforementioned articles.

In this work we focus on the computational problem of distributions and on the calculation of multimatrix probabilities.

A multimatrix theory that solves the addressed previous problems but that cannot be computed would add to the hundreds of theoretical results in matrix analysis developed in the last seven decades. Since that time, the advent of the theory of integration on orthogonal groups due to A.T. James, allowed the calculation of the joint distribution of the latent roots of any function of a positive definite matrix, among many others applications, see for example \citet{j:60}. The solution created a set of orthogonal polynomials for the expansion of the power of the trace of a matrix and with it the central matrix variate statistics on Gaussian or elliptic models could be built in all real normed division algebras. Only a few years ago the results involving hypergeometric type series of zonal polynomials could be approximated numerically, so it is quite a challenge for new theories that can be expressed in computable distributions. The central and isotropic cases have such an opportunity, but the non-central case, useful for solving, for example, the distribution of the latent roots of a Wishart matrix, came about with the creation in 1979 of an extension of the James zonal polynomials and which A.W.Davis called invariant polynomials of several matrix arguments (\citet{d:79}). The success of the calculation of zonal polynomials lay in their recurring construction using the Laplace-Beltrami operator (\citet{j:68}), so Davis conjectured in 1979 that his polynomials could have such a construction. Unfortunately, in 2016 the impossibility of the method was demonstrated, leaving open the calculation problem for results involving Davis polynomials (\citet{cl:16}). In this context, and for now, only results involving at most zonal polynomials can be computed and be useful for applications. A natural challenge for a multimatrix theory is that the joint distribution can be integrated to compute matrix event probability problems. This aspect has had little treatment in the literature and very few explicit calculations of probabilities in cones are known. The problem of calculating multiple probabilities in a joint distribution is precisely what we also hope to solve, in such a way that at most they are expressed in computable series of zonal polynomials.

The above discussion is placed in the present article by computing some probabilities in multimatrix variate distributions. Some preliminary integrals of interest as well as the definition of the
matrix variate elliptical contoured distributions and generalized series of zonal polynomials are collected in Section \ref{sec:2}.  Then, Section \ref{sec:3} revises the problem of computation in matrix variate distribution theory and provides some results on multiple probabilities on cones written in terms of computable series of zonal polynomials. Some of the probabilities are invariant for the complete family of elliptical distributions, meanwhile the non invariant distributions promoted the simplification of kernel elliptical integrals indexed by general derivatives; that result is applied in affine shape theory under real normed division algebras. Section \ref{sec:4} is based on the dependent joint distribution of a sample derived from a molecular docking in a new cavity of SARS-CoV-2 main protease. Then a simulation of a dynamic molecular docking is set in terms of the probabilities arising from a deformation of the ligand into the corresponding binding site.

\section{Preliminary results}\label{sec:2}

Matrix notations, matrix variate elliptical contoured distributions, zonal polynomials and some multimatrix variate distributions are presented in this section.
First, we start with notations and terminologies. $\mathbf{A}\in \Re^{n \times m}$ denotes a \emph{matrix} with $n$
rows and $m$ columns; $\mathbf{A}'\in \Re^{m \times n}$ is the \emph{transpose matrix}, and if
$\mathbf{A}\in \Re^{n \times n}$ has an \emph{inverse}, it will be denoted by $\mathbf{A}^{-1} \in
\Re^{n \times n}$. $\mathbf{A}\in \Re^{n \times n}$ is a \emph{symmetric matrix} if $\mathbf{A} =
\mathbf{A}'$. If all their eigenvalues are positive then $\mathbf{A}$ is a \emph{positive definite
matrix},  a fact denoted as $\mathbf{A} > \mathbf{0}$. An \emph{identity matrix} will be denoted by
$\mathbf{I}\in \Re^{n \times n}$. To specify the size of the identity, we will use $\mathbf{I}_{n}$. $\tr
(\mathbf{A})$ denotes the trace of matrix $\mathbf{A} \in \Re^{m \times m}$. If $\mathbf{A}\in \Re^{n
\times m}$ then by $\vec (\mathbf{A})$ we mean the $mn \times 1$ vector formed by stacking the columns of
$\mathbf{A}$ under each other; that is, if $\mathbf{A} = [\mathbf{a}_{1}\mathbf{a}_{2}\dots
\mathbf{a}_{m}]$, where $\mathbf{a}_{j} \in \Re^{n \times 1}$ for $j = 1, 2, \dots,m$, 
$
  \vec(\mathbf{A})= \left [
                       \mathbf{a}_{1}' \\
                       \mathbf{a}_{2}' \\
                       \cdots \\
                       \mathbf{a}_{m}'
                    \right ]'.
$
The \textit{Frobenius norm} of a matrix $\mathbf{A}$ will be denoted as $||\mathbf{A}||$. Typically the
Frobenius norm is denoted as $||\mathbf{A}||_{F}$, to differentiate it from other matrix norms. Since
we will use only the Frobenius norm, it just be denoted as $||\mathbf{A}||$. It is
defined by
$$
  ||\mathbf{A}|| = \sqrt{\tr(\mathbf{A}'\mathbf{A})}=\sqrt{\vec'(\mathbf{A})\vec(\mathbf{A})}.
$$

Finally, $\mathcal{V}_{m,n}$ denotes the \emph{Stiefel manifold}, the space of all matrices
$\mathbf{H}_{1} \in \Re^{n \times m}$ ($n \geq m$) with orthogonal columns, that is, $\mathcal{V}_{m,n} =
\{\mathbf{H}_{1} \in \Re^{n \times m}; \mathbf{H}'_{1}\mathbf{H}_{1} = \mathbf{I}_{m}\}$. In addition, if
$(\mathbf{H}'_{1}d\mathbf{H}_{1})$ defines an \textit{invariant measure on the Stiefel manifold}
$\mathcal{V}_{m,n}$, from Theorem 2.1.15, p. 70 in \cite{mh:05},
\begin{equation}\label{evs}
  \int_{\mathcal{V}_{m,n}} (\mathbf{H}'_{1}d\mathbf{H}_{1}) = \frac{2^{m} \pi^{mn/2}}{\Gamma_{m}[n/2]}.
\end{equation}
where $\Gamma_{m}[a]$ denotes the multivariate Gamma function, see \citet[Definiton 2.1.10, p. 61]{mh:05}

Now, let $\mathbf{V} \in \Re^{N \times m}$ random matrix with a \textit{matrix variate elliptical
distribution} with respect to the Lebesgue measure $(d\mathbf{V})$, see \citet{gv:93}. Therefore its density function is
given by
\begin{equation}\label{elliptical}
 dF_{_{\mathbf{V}}} (\mathbf{V}) = |\mathbf{\Sigma}|^{-N/2}|\mathbf{\Theta}|^{-m/2}
  h\left\{\tr\left[(\mathbf{V}-\boldsymbol{\mu})^{T}\mathbf{\Sigma}^{-1}(\mathbf{V}-
  \boldsymbol{\mu})\mathbf{\Theta}^{-1}\right]\right\}(d\mathbf{V}).
\end{equation}
The location  parameter is $\boldsymbol{\mu}\in \Re^{N \times m}$; and the scale parameters
$\mathbf{\Sigma}\in \Re^{N \times N}$ and $\mathbf{\Theta}\in \Re^{m \times m}$, are positive definite
matrices. The distribution shall be denoted by $\mathbf{V}\sim \mathcal{E}_{N \times m}(\boldsymbol{\mu}
,\mathbf{\Sigma}, \mathbf{\Theta}; h)$, and indexed by the kernel function $h\mbox{: } \Re \to
[0,\infty)$, where $\int_{0}^\infty u^{Nm/2-1}h(u)du < \infty$.

When $\boldsymbol{\mu} = \mathbf{0}$, $\mathbf{\Sigma} = \mathbf{I}_{N}$ and $\mathbf{\Theta} =
\mathbf{I}_{m}$ a special case of a matrix variate elliptical distribution appears, in this case it is
said that $\mathbf{V}$ has a \textit{matrix variate spherical distribution}.

\texttt{}Note that for constant $a \in \Re$, the substitution $v = u/a$ and $(du) =
a(dv)$ in \citet[Equation 2.21, p. 26]{fzn:90} provides that
\begin{equation}\label{int}
    \int_{v > 0} v^{Nm/2-1}h(a v) (dv)= \frac{a^{-Nm/2} \Gamma_{1}[Nm/2]}{\pi^{Nm/2}}
\end{equation}

For some applications we will require James zonal polynomials, notated by $C_{\rho}(\mathbf{A})$, see \citet{mh:05}. They are an orthogonal base expansion in $(\tr \mathbf{A})^{r}=\sum_{\rho \in r}C_{\rho}(\mathbf{A})$. Here  $\mathbf{A}\in \Re^{m \times m}$ is positive definite and the summation is over all ordered partitions $\rho=(r_{1},...,r_{m})$, into not more than $m$ parts, such that $r_{1}\geq r_{2}\geq \cdots \geq r_{m}\geq 0$. 
Series of zonal polynomials in this work will required the following expression given by \citet{cldggf:10} and \citet{cldg:12}:
\begin{equation}\label{genhf}
    {}_{p}^{r}P_{q}[f(r,\mathbf{X}):a_{1},\ldots,a_{p};b_{1},\ldots,b_{q};\mathbf{X}]=\sum_{r=0}^{\infty}\frac{f(r,\mathbf{X})}{r!}\sum_{\rho\in r}\frac{(a_{1})_{\rho}\cdots(a_{p})_{\rho}}{(b_{1})_{\rho}\cdots(b_{q})_{\rho}}C_{\rho}(\mathbf{X})
\end{equation} 
where the function $f(r,\mathbf{X})$ is independent of $\rho$ and $(w)_{\rho}=\prod_{i=1}^{m}(w-\frac{1}{2}(i-1))_{r_{i}}$, with $(w)_{0}=1$, $(w)_{t}=w(w+1)\cdots(w+t-1)$ and the $a_{i}'s$ and $b_{j}'s$ are complex numbers. If $f(r,\mathbf{X})=1$, then the well known hypergeometric series ${}_{p}F_{q}(a_{1},\ldots,a_{p};b_{1},\ldots,b_{q};\mathbf{X})$ is obtained, see for example \citet{mh:05}.
We also introduce the notation 
\begin{equation}\label{genpartsum}
    {}_{p}^{r}Q_{q}[a_{1},\ldots,a_{p};b_{1},\ldots,b_{q};\mathbf{X}]=\sum_{\rho\in r}\frac{(a_{1})_{\rho}\cdots(a_{p})_{\rho}}{(b_{1})_{\rho}\cdots(b_{q})_{\rho}}C_{\rho}(\mathbf{X})
\end{equation} 

Finally, we focus on some multimatrix variate distributions derived by \citet{dgcl:24a} which are invariant under family of elliptically countered distributions. i.e. they are not dependent of the kernel function; a preferable property for applications that avoids any prior knowledge of the underlying distribution.

\begin{lemma}\label{mggb2}
Assume that $\mathbf{X} = \left(\mathbf{X}'_{0}, \dots,\mathbf{X}'_{k} \right)'$ has a matrix variate
spherical distribution, with $\mathbf{X}_{i} \in \Re^{n_{i} \times m}$, $n_{i} \geq m$, $i = 0,1, \dots,
k$. Define $V = ||\mathbf{X}_{0}||^{2}$ and $\mathbf{T}_{i} = V ^{-1/2}\mathbf{X}_{i}$, $i = 1,\dots,k$.
The termed  \emph{multimatrix variate Pearson type VII} is the marginal density
$dF_{\mathbf{T}_{1}, \dots,\mathbf{T}_{k}}(\mathbf{T}_{1}, \dots,\mathbf{T}_{k})$ of $\mathbf{T}_{1},
\dots,\mathbf{T}_{k}$ and is given by
\begin{equation}\label{mp7}
   \frac{\Gamma_{1}[Nm/2]}{\pi^{(N-n_{0})m/2}\Gamma_{1}[n_{0}m/2]}
  \left(1+\displaystyle\sum_{i=1}^{k}||\mathbf{T}_{i}||^{2}\right)^{-Nm/2}
  \bigwedge_{i=1}^{k}\left(d\mathbf{T}_{i}\right),
\end{equation}
where $\mathbf{T}_{i} \in \Re^{n_{i} \times m}$, $n_{i} \geq m$ and $N = n_{0}+n_{1}+\cdots+n_{k}$.
Now define $\mathbf{F}_{i} =
\mathbf{T}'_{i}\mathbf{T}_{i} > \mathbf{0}$, $i = 1, \dots,k$. 
The \emph{multimatrix variate beta type II distribution}
 $dF_{\mathbf{F}_{1}, \dots, \mathbf{F}_{k}}(\mathbf{F}_{1}, \dots,\mathbf{F}_{k})$
is given by
\begin{equation}\label{mb2}
     \frac{\Gamma_{1}[Nm/2]}{\Gamma_{1}[n_{0}m/2]\displaystyle\prod_{i=1}^{k}\Gamma_{m}[n_{i}/2]}
  \prod_{i=1}^{k}|\mathbf{F}_{i}|^{(n_{i}-m-1)/2}
  \left(1+\displaystyle\sum_{i=1}^{k}\tr\mathbf{F}_{i}\right)^{-Nm/2}
  \bigwedge_{i=1}^{k}\left(d\mathbf{F}_{i}\right).
\end{equation}
\end{lemma}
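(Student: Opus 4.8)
The plan is to obtain both densities by successive changes of variables, starting from the joint spherical density of $\mathbf{X}$ and integrating out the nuisance coordinates with the two identities already at hand: the one-dimensional kernel integral \eqref{int} and the Stiefel volume \eqref{evs}.

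For the multimatrix variate Pearson type VII density \eqref{mp7}, the spherical assumption gives the joint density $h\!\left(\sum_{i=0}^{k}\|\mathbf{X}_{i}\|^{2}\right)\bigwedge_{i=0}^{k}(d\mathbf{X}_{i})$. I would change variables from $(\mathbf{X}_{0},\mathbf{X}_{1},\dots,\mathbf{X}_{k})$ to $(\mathbf{X}_{0},\mathbf{T}_{1},\dots,\mathbf{T}_{k})$ through $\mathbf{X}_{i}=V^{1/2}\mathbf{T}_{i}$, with $V=\|\mathbf{X}_{0}\|^{2}$, holding $\mathbf{X}_{0}$ fixed. Since $\mathbf{X}_{0}$ is carried over unchanged and each $\mathbf{X}_{i}$ depends on $\mathbf{T}_{i}$ only through the common scalar $V^{1/2}$, the Jacobian matrix is block triangular with diagonal blocks $\mathbf{I}$ and $V^{1/2}\mathbf{I}_{n_{i}m}$, so its determinant is $\prod_{i=1}^{k}V^{n_{i}m/2}=V^{(N-n_{0})m/2}$. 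The argument of $h$ collapses to $V\left(1+\sum_{i=1}^{k}\|\mathbf{T}_{i}\|^{2}\right)$, so after the change the integrand depends on $\mathbf{X}_{0}$ only through $V$.

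The decisive step is to integrate $\mathbf{X}_{0}\in\Re^{n_{0}\times m}\cong\Re^{n_{0}m}$ out. Using polar coordinates in $\Re^{n_{0}m}$, any integral of a function $g$ of $V=\|\mathbf{X}_{0}\|^{2}$ reduces to $\frac{\pi^{n_{0}m/2}}{\Gamma_{1}[n_{0}m/2]}\int_{0}^{\infty}g(v)\,v^{n_{0}m/2-1}\,dv$. Taking $g(v)=h(v\,c)\,v^{(N-n_{0})m/2}$ with $c=1+\sum_{i=1}^{k}\|\mathbf{T}_{i}\|^{2}$, the two powers of $v$ combine to $v^{Nm/2-1}$, so the remaining radial integral is precisely the one evaluated in \eqref{int} with $a=c$. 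Substituting its value $c^{-Nm/2}\Gamma_{1}[Nm/2]/\pi^{Nm/2}$ and collecting the powers of $\pi$ gives the coefficient $\frac{\Gamma_{1}[Nm/2]}{\pi^{(N-n_{0})m/2}\Gamma_{1}[n_{0}m/2]}$, i.e.\ exactly \eqref{mp7}; the disappearance of $h$ exhibits the announced invariance.

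For the multimatrix variate beta type II density \eqref{mb2}, I would start from \eqref{mp7} and send each $\mathbf{T}_{i}\mapsto\mathbf{F}_{i}=\mathbf{T}_{i}'\mathbf{T}_{i}$ independently. Writing $\mathbf{T}_{i}=\mathbf{H}_{i}\mathbf{F}_{i}^{1/2}$ with $\mathbf{H}_{i}\in\mathcal{V}_{m,n_{i}}$, the standard factorization $(d\mathbf{T}_{i})=2^{-m}|\mathbf{F}_{i}|^{(n_{i}-m-1)/2}(d\mathbf{F}_{i})(\mathbf{H}_{i}'d\mathbf{H}_{i})$ together with \eqref{evs} replaces each $(d\mathbf{T}_{i})$ by $\frac{\pi^{n_{i}m/2}}{\Gamma_{m}[n_{i}/2]}|\mathbf{F}_{i}|^{(n_{i}-m-1)/2}(d\mathbf{F}_{i})$; meanwhile $\|\mathbf{T}_{i}\|^{2}=\tr\mathbf{F}_{i}$ turns the power term into $\left(1+\sum_{i=1}^{k}\tr\mathbf{F}_{i}\right)^{-Nm/2}$. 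The accumulated $\prod_{i=1}^{k}\pi^{n_{i}m/2}=\pi^{(N-n_{0})m/2}$ then cancels the factor $\pi^{-(N-n_{0})m/2}$ inherited from \eqref{mp7}, producing \eqref{mb2}. I expect the only real difficulty to be bookkeeping: justifying that the cross-derivatives $\partial\mathbf{X}_{i}/\partial\mathbf{X}_{0}$ lie strictly off the block diagonal and so do not enter the Jacobian, and tracking the powers of $\pi$ together with the identification $\Gamma_{1}[n_{0}m/2]=\Gamma(n_{0}m/2)$ so that these normalizing constants telescope exactly as claimed; the radial reduction and the Stiefel integration are then routine.
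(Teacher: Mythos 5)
Your proposal is correct and follows essentially the same route as the derivation the paper cites from \citet{dgcl:24a}: a block-triangular change of variables $\mathbf{X}_{i}=V^{1/2}\mathbf{T}_{i}$ with Jacobian $V^{(N-n_{0})m/2}$, radial integration of $\mathbf{X}_{0}$ via the normalization identity \eqref{int} (which kills the generator $h$ and yields the invariance), and then the polar factorization $(d\mathbf{T}_{i})=2^{-m}|\mathbf{F}_{i}|^{(n_{i}-m-1)/2}(d\mathbf{F}_{i})(\mathbf{H}_{i}'d\mathbf{H}_{i})$ integrated over the Stiefel manifold with \eqref{evs}. All constants and exponents check out, so no gaps remain.
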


\begin{lemma}\label{ggbI}
Suppose that $\mathbf{X} = \left(\mathbf{X}'_{0}, \dots, \mathbf{X}'_{k} \right)'$ has a matrix variate
spherical distribution, with $\mathbf{X}_{i} \in \Re^{n_{i} \times m}$, $n_{i} \geq m$, $i = 0,1, \dots,
k$. Define $V = ||\mathbf{X}_{0}||^{2}$ and $\mathbf{R}_{i} = \left(V +
||\mathbf{X}_{i}||^{2}\right)^{-1/2} \mathbf{X}_{i}$, $i = 1,\dots,k$. The \emph{multimatrix variate Pearson type II distribution} $dF_{\mathbf{R}_{1}, \dots,\mathbf{R}_{k}}(\mathbf{R}_{1},
\dots,\mathbf{R}_{k})$ is
$$
  \frac{\Gamma_{1}[Nm/2]}{\pi^{(N-n_{0})m/2}\Gamma_{1}[n_{0}m/2]} \left[ 1+\displaystyle\sum_{i=1}^{k}
  \frac{||\mathbf{R}_{i}||^{2}}{\left(1- ||\mathbf{R}_{i}||^{2}\right)}\right ]^{-Nm/2} \hspace{2cm}
$$
\begin{equation}\label{mp2}
  \hspace{3cm}\times  \prod_{i=i}^{k}\left(1-
  ||\mathbf{R}_{i}||^{2}\right)^{-n_{i}m/2-1}
  \bigwedge_{i=1}^{k}\left(d\mathbf{R}_{i}\right),
\end{equation}
Moreover, Assuming that $\mathbf{B}_{i} = \mathbf{R}'_{i}\mathbf{R}_{i} > \mathbf{0}$ and $\tr \mathbf{B}_{i} \leq 1$ with $i = 1, \dots,k$, the \emph{multimatrix variate beta type I distribution} $dF_{\mathbf{B}_{1}, \dots, \mathbf{B}_{k}}(\mathbf{B}_{1},
\dots,\mathbf{B}_{k})$ is written as
$$
  \frac{\Gamma_{1}[Nm/2]}{\Gamma_{1}[n_{0}m/2]} \prod_{i=1}^{k}\left(\frac{|\mathbf{B}_{i}|^{(n_{i}-m-1)/2}}{\Gamma_{m}[n_{i}/2]}\right )
  \left[ 1+\displaystyle\sum_{i=1}^{k} \frac{\tr\mathbf{B}_{i}}{\left(1-
  \tr\mathbf{B}_{i}\right)}\right ]^{-Nm/2}\hspace{3cm}
$$
\begin{equation}\label{b1}
  \hspace{4cm}
  \times  \prod_{i=i}^{k}\left(1-
  \tr \mathbf{B}_{i}\right)^{-n_{i}m/2-1}
  \bigwedge_{i=1}^{k}\left(d\mathbf{B}_{i}\right).
\end{equation}
\end{lemma}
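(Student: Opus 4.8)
The plan is to obtain both displays by successive changes of variables starting from the spherical law of $\mathbf{X}=(\mathbf{X}_0',\dots,\mathbf{X}_k')'$. Since $\mathbf{X}$ is spherical and $\tr(\mathbf{X}'\mathbf{X})=\sum_{i=0}^k||\mathbf{X}_i||^2$, its density with respect to $\bigwedge_{i=0}^k(d\mathbf{X}_i)$ is $h\!\left(\sum_{i=0}^k||\mathbf{X}_i||^2\right)$. I would first derive \eqref{mp2} and then read off \eqref{b1} from it, paralleling the route from \eqref{mp7} to \eqref{mb2} in Lemma~\ref{mggb2}.

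For \eqref{mp2}, retain $\mathbf{X}_0$ (so that $V=||\mathbf{X}_0||^2$ is held fixed) and pass from $\mathbf{X}_i$ to $\mathbf{R}_i$, $i=1,\dots,k$. Inverting $\mathbf{R}_i=(V+||\mathbf{X}_i||^2)^{-1/2}\mathbf{X}_i$ gives $||\mathbf{X}_i||^2=V\,||\mathbf{R}_i||^2/(1-||\mathbf{R}_i||^2)$, hence $\mathbf{X}_i=V^{1/2}(1-||\mathbf{R}_i||^2)^{-1/2}\mathbf{R}_i$, valid for $||\mathbf{R}_i||^2<1$. Because $\mathbf{X}_0$ maps to itself and, with $V$ fixed, each $\mathbf{X}_i$ depends only on its own $\mathbf{R}_i$, the Jacobian is block triangular and $\bigwedge_{i=1}^k(d\mathbf{X}_i)=\prod_{i=1}^k J_i\,\bigwedge_{i=1}^k(d\mathbf{R}_i)$, where $J_i$ is the Jacobian of the $i$-th block.

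The main obstacle is this per-block Jacobian, i.e.\ the derivative of $\mathbf{r}\mapsto V^{1/2}(1-\mathbf{r}'\mathbf{r})^{-1/2}\mathbf{r}$ on $\Re^{n_im}$ with $\mathbf{r}=\vec(\mathbf{R}_i)$. Differentiation produces $V^{1/2}(1-\mathbf{r}'\mathbf{r})^{-1/2}[\mathbf{I}+(1-\mathbf{r}'\mathbf{r})^{-1}\mathbf{r}\mathbf{r}']$, and the rank-one identity $\det(\mathbf{I}+c\,\mathbf{r}\mathbf{r}')=1+c\,\mathbf{r}'\mathbf{r}$ collapses the bracket to $(1-\mathbf{r}'\mathbf{r})^{-1}$, so that $J_i=V^{n_im/2}(1-||\mathbf{R}_i||^2)^{-n_im/2-1}$. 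Recognizing this rank-one structure is the only genuinely nonroutine step. Since $V+\sum_{i=1}^k||\mathbf{X}_i||^2=V\!\left(1+\sum_{i=1}^k\frac{||\mathbf{R}_i||^2}{1-||\mathbf{R}_i||^2}\right)$, it remains to integrate out $\mathbf{X}_0$: in polar form over $\Re^{n_0m}$ the sphere of radius $V^{1/2}$ contributes $2\pi^{n_0m/2}/\Gamma_1[n_0m/2]$, the radial integrand reduces to $V^{Nm/2-1}h(aV)$ (the factor $2$ in the surface area cancelling the Jacobian $1/2$ from $V=||\mathbf{X}_0||^2$) with $a=1+\sum_{i=1}^k||\mathbf{R}_i||^2/(1-||\mathbf{R}_i||^2)$, and this is evaluated by \eqref{int}. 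Collecting constants gives \eqref{mp2}.

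Finally, \eqref{b1} follows from \eqref{mp2} through $\mathbf{B}_i=\mathbf{R}_i'\mathbf{R}_i$. As the Pearson type~II density involves each $\mathbf{R}_i$ only through $||\mathbf{R}_i||^2=\tr\mathbf{B}_i$, I use the factorization $(d\mathbf{R}_i)=2^{-m}|\mathbf{B}_i|^{(n_i-m-1)/2}(d\mathbf{B}_i)(\mathbf{H}_1'd\mathbf{H}_1)$ and integrate the Stiefel part by \eqref{evs}, replacing $(d\mathbf{R}_i)$ with $\pi^{mn_i/2}\Gamma_m[n_i/2]^{-1}|\mathbf{B}_i|^{(n_i-m-1)/2}(d\mathbf{B}_i)$. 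The product $\prod_{i=1}^k\pi^{mn_i/2}=\pi^{(N-n_0)m/2}$ cancels the matching factor in \eqref{mp2}, and substituting $\tr\mathbf{B}_i$ for $||\mathbf{R}_i||^2$ throughout yields \eqref{b1}; the constraint $\tr\mathbf{B}_i\le1$ is inherited from $||\mathbf{R}_i||^2<1$.
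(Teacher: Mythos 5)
Your proof is correct: the per-block Jacobian $V^{n_im/2}(1-||\mathbf{R}_i||^2)^{-n_im/2-1}$ via the rank-one determinant identity, the polar integration of $\mathbf{X}_{0}$ reduced to (\ref{int}) with $a=1+\sum_{i=1}^{k}||\mathbf{R}_i||^{2}/(1-||\mathbf{R}_i||^{2})$, and the passage to (\ref{b1}) by the factorization $(d\mathbf{R}_i)=2^{-m}|\mathbf{B}_i|^{(n_i-m-1)/2}(d\mathbf{B}_i)(\mathbf{H}'_{1}d\mathbf{H}_{1})$ with (\ref{evs}) all check, and every constant matches the stated densities. The paper itself states this lemma without proof, citing \citet{dgcl:24a}, and your derivation is exactly the standard route built from the paper's own preliminaries, paralleling Lemma \ref{mggb2}.
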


Finally

\begin{lemma}\label{ggW} Assume that $\mathbf{X} = \left(\mathbf{X}'_{0}, \dots,
\mathbf{X}'_{k} \right)'$ has  a matrix variate spherical distribution, with $\mathbf{X}_{i} \in
\Re^{n_{i} \times m}$, $n_{i} \geq m$, $i = 0,1, \dots, k$. Define $V = ||\mathbf{X}_{0}||^{2}$ and
$\mathbf{W}_{i} = \mathbf{X}'_{i}\mathbf{X}_{i} > \mathbf{0}$, $i = 1, \dots,k$. \newline Then, the joint density
$dF_{V,\mathbf{W}_{1}, \dots,\mathbf{W}_{k}}(v,\mathbf{X}_{1}, \dots,\mathbf{W}_{k})$ is given by
\begin{equation}\label{mggw}
   \frac{\pi^{Nm/2} v^{Nm/2-1}}{\Gamma_{1}[n_{0}m/2]}\prod_{i=1}^{k}\left(\frac{|\mathbf{W}_{i}|^{(n_{i}-m-1)/2}}{\Gamma_{m}[n_{i}/2]}\right)
   h\left[v+\displaystyle\sum_{i=1}^{k}\tr \mathbf{W}_{i}\right]
   (dv)\bigwedge_{i=1}^{k}\left(d\mathbf{W}_{i}\right),
\end{equation}
where $V > 0$. This distribution shall be termed \emph{multimatrix variate generalised Gamma - generalised Wishart
distribution}.
The joint density function
$dF_{\mathbf{W}_{1}, \dots,\mathbf{W}_{k}}(\mathbf{X}_{1}, \dots,\mathbf{W}_{k})$ is not invariant under spherical elliptical functions but it can be written in following closed form:
\begin{equation}\label{mggwv}
   \pi^{(N-n_{0})m/2} \prod_{i=1}^{k}\left(\frac{|\mathbf{W}_{i}|^{(n_{i}-m-1)/2}}{\Gamma_{m}[n_{i}/2]}\right)
   h\left[\displaystyle\sum_{i=1}^{k}\tr \mathbf{W}_{i}\right]
   \bigwedge_{i=1}^{k}\left(d\mathbf{W}_{i}\right),
\end{equation}
This marginal distribution shall be termed \emph{multimatrix variate generalised Wishart
distribution}.
\end{lemma}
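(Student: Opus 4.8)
The plan is to obtain both densities from the spherical law of $\mathbf{X}$ by reducing it one block at a time. Since $\mathbf{X}=(\mathbf{X}_0',\dots,\mathbf{X}_k')'$ is spherical its density is $h(\tr\mathbf{X}'\mathbf{X})(d\mathbf{X})$, and the block structure supplies simultaneously the trace decomposition $\tr\mathbf{X}'\mathbf{X}=\sum_{i=0}^{k}\tr\mathbf{X}_i'\mathbf{X}_i=v+\sum_{i=1}^{k}\tr\mathbf{W}_i$ and the factorisation $(d\mathbf{X})=\bigwedge_{i=0}^{k}(d\mathbf{X}_i)$. The feature I would exploit throughout is that the argument of $h$ is a sum of traces, each invariant under the angular coordinates of its own block; hence, after separating the radial and positive-definite variables from the angular ones, the angular integrations decouple completely from $h$ and reduce to the constant masses furnished by \eqref{evs}.

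For the block $\mathbf{X}_0$ I would pass to polar coordinates, writing $\vec(\mathbf{X}_0)=\sqrt{v}\,\mathbf{u}$ with $\mathbf{u}\in\mathcal{V}_{1,n_0m}$ a unit vector. This produces the generalised-Gamma radial weight $v^{n_0m/2-1}\,dv$ (the exponent matching the $n_0m$ real dimensions of $\mathbf{X}_0$) together with the invariant measure on $\mathcal{V}_{1,n_0m}$, whose total mass $2\pi^{n_0m/2}/\Gamma_1[n_0m/2]$ is the $m=1$ instance of \eqref{evs}; the customary factor $\tfrac12$ from the radial change of variables then leaves the constant $\pi^{n_0m/2}/\Gamma_1[n_0m/2]$. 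For each $\mathbf{X}_i$ with $i\ge1$ I would use the decomposition $\mathbf{X}_i=\mathbf{H}_i\mathbf{A}_i$, with $\mathbf{H}_i\in\mathcal{V}_{m,n_i}$ and $\mathbf{A}_i$ upper triangular with positive diagonal, so that $\mathbf{W}_i=\mathbf{A}_i'\mathbf{A}_i$ and $(d\mathbf{X}_i)=2^{-m}|\mathbf{W}_i|^{(n_i-m-1)/2}(d\mathbf{W}_i)(\mathbf{H}_i'd\mathbf{H}_i)$; integrating $\mathbf{H}_i$ over $\mathcal{V}_{m,n_i}$ by \eqref{evs} turns $2^{-m}$ into $\pi^{n_im/2}/\Gamma_m[n_i/2]$ and leaves the determinantal weight $|\mathbf{W}_i|^{(n_i-m-1)/2}$. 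Multiplying the radial constant by the $k$ Stiefel constants collapses the powers of $\pi$ to $\pi^{Nm/2}$ and reproduces the kernel $h[v+\sum_{i}\tr\mathbf{W}_i]$, the determinantal weights, and the prefactor $\pi^{Nm/2}/\Gamma_1[n_0m/2]$ of \eqref{mggw}.

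The marginal \eqref{mggwv} I would then read off by integrating $v$ out of \eqref{mggw}, and this is where I expect the main obstacle. Because the argument of $h$ is the \emph{shift} $v+\sum_i\tr\mathbf{W}_i$ rather than a product, the identity \eqref{int}, which only resolves kernels of the form $h(av)$, does not apply directly; the integral $\int_0^{\infty}v^{n_0m/2-1}h(v+\sum_i\tr\mathbf{W}_i)\,dv$ returns a genuinely new radial transform of $h$ rather than $h$ itself. This is exactly the stated loss of invariance: the explicit prefactor reduces to $\pi^{(N-n_0)m/2}\prod_i|\mathbf{W}_i|^{(n_i-m-1)/2}/\Gamma_m[n_i/2]$, while the $\mathbf{X}_0$-integration folds into the transformed kernel that must be understood as the $h$ displayed in \eqref{mggwv}. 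The contrast with Lemma \ref{mggb2} is instructive and would guide the bookkeeping: there the scaling $\mathbf{T}_i=V^{-1/2}\mathbf{X}_i$ forces the argument of $h$ to factor as $v\,(1+\sum_i||\mathbf{T}_i||^2)$, so \eqref{int} applies and the kernel cancels, whereas here no such factorisation occurs and the kernel survives in altered form. The only delicate point is to keep the $\pi$- and $\Gamma_1$-constants consistently attached either to the transformed kernel or to the explicit prefactor.
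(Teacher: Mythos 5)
Your derivation of the joint density is sound and is, in substance, the standard route: polar coordinates on $\mathbf{X}_{0}$ (the $m=1$ instance of (\ref{evs}) giving the mass $2\pi^{n_{0}m/2}/\Gamma_{1}[n_{0}m/2]$, halved by the radial substitution) together with the factorisation $\mathbf{X}_{i}=\mathbf{H}_{i}\mathbf{A}_{i}$, $(d\mathbf{X}_{i})=2^{-m}|\mathbf{W}_{i}|^{(n_{i}-m-1)/2}(d\mathbf{W}_{i})(\mathbf{H}_{i}'d\mathbf{H}_{i})$, and Stiefel integration for each $i\geq 1$. The paper itself offers no proof at this point --- it defers (\ref{mggw}) to \citet{dgcl:24a} and merely asserts (\ref{mggwv}) --- so your reconstruction is the natural one. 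Your reading of the marginal is also the correct one: $\int_{0}^{\infty}v^{n_{0}m/2-1}h(v+\sum_{i}\tr\mathbf{W}_{i})\,dv$ is a genuinely new radial transform of $h$, so (\ref{mggwv}) can only be read with the transformed kernel standing in for the displayed $h$; that is precisely the non-invariance the lemma announces, and it is equivalent to the device the paper actually intends (see the sentence following the lemma and Section \ref{sec:3}), namely the termwise expansion $h(v+\sum_{i}\tr\mathbf{W}_{i})=\sum_{r}h^{(r)}(v)\,(\sum_{i}\tr\mathbf{W}_{i})^{r}/r!$, which trades your transform for the scalar coefficients $\int_{v>0}v^{\cdot}\,h^{(r)}(v)\,dv$ used later in the paper.

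The one genuine defect in your write-up is that you claim to have reproduced (\ref{mggw}) while silently obtaining a different formula: your radial weight is $v^{n_{0}m/2-1}$, whereas (\ref{mggw}) prints $v^{Nm/2-1}$. You should have flagged the discrepancy and resolved it, because your exponent is the correct one. A normalisation check with the Gaussian generator $h(u)=(2\pi)^{-Nm/2}e^{-u/2}$ settles it: with $v^{n_{0}m/2-1}$ the right-hand side of (\ref{mggw}) integrates to $1$, while with $v^{Nm/2-1}$ it integrates to $2^{(N-n_{0})m/2}\,\Gamma_{1}[Nm/2]/\Gamma_{1}[n_{0}m/2]$. The printed exponent is evidently carried over from the Lemma \ref{mggb2} pattern, where the scalings $\mathbf{X}_{i}=V^{1/2}\mathbf{T}_{i}$ contribute Jacobian factors $v^{n_{i}m/2}$ that accumulate to $v^{Nm/2-1}$; no such factors arise here because the $\mathbf{W}_{i}$ are not rescaled by $V$. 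The same Gaussian check vindicates your transformed-kernel reading of (\ref{mggwv}): the map $y\mapsto\pi^{n_{0}m/2}\,\Gamma_{1}^{-1}[n_{0}m/2]\int_{0}^{\infty}v^{n_{0}m/2-1}h(v+y)\,dv$ carries $(2\pi)^{-Nm/2}e^{-y/2}$ to $(2\pi)^{-(N-n_{0})m/2}e^{-y/2}$, the canonical $(N-n_{0})m$-dimensional Gaussian generator --- exactly what makes the independent-Wishart probability (\ref{probgenwishartgaussian}) of Theorem \ref{probwishart} come out right --- whereas inserting the original $h$ literally into (\ref{mggwv}) would be off by the factor $(2\pi)^{-n_{0}m/2}$. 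Making these two points explicit would turn your proposal into a complete and, indeed, corrected proof of the lemma.
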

As in the preceding lemmas, result (\ref{mggw}) was derived in \citet{dgcl:24a}. However, proposing the joint marginal (\ref{mggwv}) by expansion of $h\left[v+\displaystyle\sum_{i=1}^{k}\tr \mathbf{W}_{i}\right]$ will lead a simplification of complex kernel integrals involving general derivatives of elliptical models.

Observe that the parameter domain of the multimatrix variate distributions can be extended to the complex or real fields. However their geometrical and/or statistical explication perhaps can be lost. These distributions are valid if we replace $n_{i}/2$ by  $a_{i}$, $n_{0}m/2$ by $a_{0}$ and $Nm/2$ by $a$. Where the $a'^{s}$ are complex numbers with positive real part. From a practical point of view for parametric estimation, this domain extension allows the use of nonlinear optimisation rather integer nonlinear optimisation, among other possibilities.

\section{Computation and Probabilities on cones}\label{sec:3}
We devote a few lines to the problem of computation of certain distributions of type (\ref{genhf}).  Certainly, there are hundreds of papers about matrix variate distribution theory, however the explicit computability of similar results to (\ref{genhf}) is not usually addressed or applied. For example, the excellent work of \citet{mh:05} has attempted certain exact distributions by some approximations, but the majority of the results in non-central models forces the appearance and computation of A.T. James zonal polynomials of one matrix argument (\citet{j:64}) or A.W. Davis invariant polynomials of several matrix arguments (\citet{d:80}). The last problem was closed recently in \citet{cl:16} by refuting \citet{d:79} conjecture about a recursion computation of Davis functions in the same way as James polynomials, leaving intractable a number of theoretical results in non-central distribution theory. But, the remarkable property of James polynomials (\citet{j:68}) (also known Jack polynomials in real normed division algebras) opened the possibility of computing a number of old series of zonal polynomials. The classical central cases on positive definite matrices were expressed in terms of hypergeometric series; for example probabilities of Wisharts matrices bounded by positive definite matrices, and so on. The key fact consists of provide a numerical computation of $(\tr \mathbf{A})^{r}=\sum_{\rho \in r}C_{\rho}(\mathbf{A})$, where  $\mathbf{A}\in \Re^{m \times m}$ is positive definite, see details in (\ref{genhf}). The numerical solution appeared very late in \citet{ke:06} after more than a half century of theoretical results. Given the addressed James recurrence construction proposed in 1968, those algorithms are sufficient for low values of $m$, because the partitions $\rho$ are trivially truncated by $m$ parts, but exact expressions for $C_{\rho}(\mathbf{A})$ with arbitrary $m=r$ are out of any knowledge. In that context, and referring to S. Ramanujan and G.H. Hardy, knowing an exact formula for the number of partitions $\rho$ of arbitrary $r$ is one of the biggest problem of mathematics in all history. The problem is such challenging that is so far to be included into the reasonable famous list of the Millennium Prize Problems (\citet{cmic:00}). If the number of $C_{\rho}(\mathbf{A})$ in the expansion of $(\tr \mathbf{A})^{r}$ is just the number of partitions, just imagine the problem of providing an exact formula for $\sum_{\rho \in r}C_{\rho}(\mathbf{A})$. Other apparitions of the number of partitions arrive in permanents (\citet{clgfbn:13}), and the general derivatives of a composite function and a Kotz model generator (\citet{cldggf:10}), among many others. Moreover, computing generalized hypergeometric series of zonal polynomials (\ref{genhf}) only can be achieved under truncation, low values of $m$ and suitable functions depending on the series index. A number of such series were computed recently by modification of the algorithms of \citet{ke:06} in the context of statistical shape theory, see \citet{cldg:12} and the references therein. In other context, several families of computable polynomial distributions based on zonal functions can be also seen in \citet{cl:18}.

Under this point of revision, we can mention that all the multimatricvariate and multimatrix variate distributions presented in \citet{dgcl:22}, \citet{dgcl:24a} and \citet{dgcl:24b} are completelly computable. Most of them are free of series representation, then the computation is straighforwardly. Such is the case of multimatrix variate Pearson and Beta type distributions given in (\ref{mp7}), (\ref{mp2}), (\ref{mb2}) and (\ref{b1}). The highlighted distributions involve the important property of invariance under the spherical family with generator $h(\cdot)$; this is crucial for a researcher, because no previous knowledge of the underlying distribution is required. No fitting distribution test must be done, except that the general assumption on ellipticity should be held.

Now, if the computation of matrix variate distribution is problematic, we can imagine the issues involved in finding a matrix probability. It should be noted that even simple univariate pdfs and cdfs for small, large or any beta, F and Wishart latent roots themselves have involved important historical problems in the last century. Discarding the trivial probabilities emerging from integrals involving zonal polynomials of an specific partition, it seems that the only existing computable matrix bounded probability are those for central Wishart ($\mathbf{W}$) and beta ($\mathbf{B}$) distribution. The Wishart lower bound probability $P(\mathbf{W}<\mathbf{\Omega})$ and the integral for matrix beta function back to \citet{c:63} (See \citet{acl:21} for the $P(\mathbf{B}<\mathbf{\Omega})$). However, as \citet{c:63} states: ``the complementary probabilities ($P(\mathbf{W}>\mathbf{\Omega})$, and $P(\mathbf{B}<\mathbf{\Omega})$) seem difficult to evaluate". The Beta probability appeared recently in \citet{acl:21}. For the Wishart probability, the explicit solution in the Gaussian case arrived in 1982 (\citet{mh:05}),  and a revision of the underlying proof was given in \citet{clgfbn:16} joint with a generalization to elliptically contoured distributions for both lower and upper probabilities indexed by kernel ($h(\cdot)$). However, probabilities for rectangular matrices and non symmetric square matrices are still open problems.              
In fact, excepting for the interest of small and large latent roots, applications of the existing probabilities on Gaussian or elliptical Wishart distributions and classical Beta matrices seem out of consideration in literature. We try to set here the interest for those probabilities, in particular emerging from a multimatrix context, which naturally arises in experiment of probabilistic dependent samples. In this aspect, once the multimatrix joint distributions involve simplicity, we expect that some related measures considers tractable series representation. A suitable next stage of application in multimatrix variate Pearson and Beta type distributions should consider the computation of probabilities. In particular, \emph{multimatrix variate beta type II distribution} in (\ref{mb2}) has a potential application that we will explore in Section \ref{sec:4}. For example,  the following result considers the probability that $\mathbf{A}_{i}-\mathbf{F}_{i}$ is a positive definite matrix for all constant positive definite matrices $\mathbf{A}_{i}$, $i=1,\ldots,k$.

\begin{theorem}\label{probbetaII} Assume the hypothesis of Lemma \ref{mggb2} and suitable  parameters for convergence of series (\ref{genhf}). For positive definite matrices $\mathbf{A}_{i} \in \Re^{m \times m}$, $i=1,\ldots,k$, the probability $P(\mathbf{0}<\mathbf{F}_{1}<\mathbf{A}_{1},\ldots,\mathbf{0}<\mathbf{F}_{k}<\mathbf{A}_{k},)$ is given by:
$$
\frac{\Gamma_{1}[Nm/2]\Gamma_{m}^{k}[(m+1)/2]}{\Gamma_{1}[n_{0}m/2]\displaystyle\prod_{i=1}^{k}\Gamma_{m}[(n_{i}+m+1)/2]}
  \prod_{i=1}^{k}|\mathbf{A}_{i}|^{n_{i}/2}\times S(r_{2},\ldots,r_{k},\mathbf{A}_{1},\ldots,\mathbf{A}_{k})\hspace{7cm},
$$
where the nested summation $S(r_{2},\ldots,r_{k},\mathbf{A}_{1},\ldots,\mathbf{A}_{k})$ is given 
by
$$
{}_{1}^{r_{k}}P_{1}\left[h_{k}{}_{1}^{r_{k-1}}P_{1}\left[h_{k-1}{}_{1}^{r_{k-2}}P_{1}{}_{1}^{r_{2}}P_{1}\left[h_{2}{}_{1}^{r_{1}}P_{1}\left[h_{1}:a_{1};b_{1};-\mathbf{A}_{1}\right]:a_{2};b_{2}; -\mathbf{A}_{2}\right]\cdots\right]
\right.:$$
\begin{equation}\label{probbetaIIeq}
\begin{tiny}
    :\left.\left.a_{k-1};b_{k-1}; -\mathbf{A}_{k-1}\right]:a_{k};b_{k}; -\mathbf{A}_{k}\right].
\end{tiny}
\end{equation}
Here (\ref{probbetaIIeq}) involves $k$ sums of type (\ref{genhf}) and depends on the indexes $r_{1},\ldots,r_{k}$ and matrices $\mathbf{A}_{1},\ldots,\mathbf{A}_{k}$,
where $h_{k}=(c)_{r_{k}}$,$h_{j}=\left(c+\sum_{i=j+1}^{k}r_{i}\right)_{r_{j}},j=1,\ldots,k-1$, $c=Nm/2$, $a_{i}=n_{i}/2,b_{i}=(n_{i}+m+1)/2, i=1,\ldots,k$.
\end{theorem}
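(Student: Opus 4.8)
The plan is to write the probability $P(\mathbf{0}<\mathbf{F}_{1}<\mathbf{A}_{1},\ldots,\mathbf{0}<\mathbf{F}_{k}<\mathbf{A}_{k})$ as the integral of the density (\ref{mb2}) over the product region $\prod_{i=1}^{k}\{\mathbf{0}<\mathbf{F}_{i}<\mathbf{A}_{i}\}$ and to handle the only factor that couples the $k$ matrices, namely $\left(1+\sum_{i=1}^{k}\tr\mathbf{F}_{i}\right)^{-Nm/2}$. I would linearise this factor with the Gamma integral representation
$$
  \left(1+\sum_{i=1}^{k}\tr\mathbf{F}_{i}\right)^{-Nm/2}=\frac{1}{\Gamma_{1}[Nm/2]}\int_{0}^{\infty}t^{Nm/2-1}e^{-t}\prod_{i=1}^{k}\etr(-t\mathbf{F}_{i})\,dt,
$$
which turns the coupling into a product of exponentials in a single auxiliary scalar $t$. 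Because the integrand is nonnegative on the bounded cones, Tonelli's theorem lets me interchange the $t$-integral with the $k$ matrix integrals, so for fixed $t$ the whole expression factors into one-matrix integrals $J_{i}(t)=\int_{\mathbf{0}<\mathbf{F}_{i}<\mathbf{A}_{i}}|\mathbf{F}_{i}|^{(n_{i}-m-1)/2}\etr(-t\mathbf{F}_{i})\,(d\mathbf{F}_{i})$.

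Next I would evaluate each $J_{i}(t)$ by the congruence substitution $\mathbf{F}_{i}=\mathbf{A}_{i}^{1/2}\mathbf{Y}_{i}\mathbf{A}_{i}^{1/2}$, whose Jacobian is $|\mathbf{A}_{i}|^{(m+1)/2}$ and which sends $\mathbf{0}<\mathbf{F}_{i}<\mathbf{A}_{i}$ onto $\mathbf{0}<\mathbf{Y}_{i}<\mathbf{I}_{m}$; collecting the determinant powers yields the factor $|\mathbf{A}_{i}|^{n_{i}/2}$. Expanding $\etr(-t\mathbf{A}_{i}\mathbf{Y}_{i})=\sum_{r_{i}}\frac{1}{r_{i}!}\sum_{\rho_{i}\in r_{i}}C_{\rho_{i}}(-t\mathbf{A}_{i}\mathbf{Y}_{i})$ and applying the standard zonal matrix-beta integral (the $b=(m+1)/2$ case in \cite{mh:05}),
$$
  \int_{\mathbf{0}<\mathbf{Y}<\mathbf{I}}|\mathbf{Y}|^{(n_{i}-m-1)/2}C_{\rho_{i}}(\mathbf{T}\mathbf{Y})\,(d\mathbf{Y})=\frac{(n_{i}/2)_{\rho_{i}}}{((n_{i}+m+1)/2)_{\rho_{i}}}\frac{\Gamma_{m}[n_{i}/2]\Gamma_{m}[(m+1)/2]}{\Gamma_{m}[(n_{i}+m+1)/2]}C_{\rho_{i}}(\mathbf{T}),
$$
together with the homogeneity $C_{\rho_{i}}(-t\mathbf{A}_{i})=t^{r_{i}}C_{\rho_{i}}(-\mathbf{A}_{i})$, gives $J_{i}(t)=|\mathbf{A}_{i}|^{n_{i}/2}\frac{\Gamma_{m}[n_{i}/2]\Gamma_{m}[(m+1)/2]}{\Gamma_{m}[(n_{i}+m+1)/2]}\sum_{r_{i}}\frac{t^{r_{i}}}{r_{i}!}\,{}_{1}^{r_{i}}Q_{1}[a_{i};b_{i};-\mathbf{A}_{i}]$ in the notation (\ref{genpartsum}).

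The reassembly step multiplies the $k$ series $J_{i}(t)$, collects $t^{r_{1}+\cdots+r_{k}}$, and performs the $t$-integration term by term using $\int_{0}^{\infty}t^{Nm/2-1+R}e^{-t}\,dt=\Gamma_{1}[Nm/2]\,(Nm/2)_{R}$ with $R=r_{1}+\cdots+r_{k}$. The $1/\Gamma_{1}[Nm/2]$ from the representation cancels the $\Gamma_{1}[Nm/2]$ produced here, while the $\prod_{i}\Gamma_{m}[n_{i}/2]$ in the denominator of (\ref{mb2}) cancels against the $\prod_{i}\Gamma_{m}[n_{i}/2]$ of the $J_{i}$, leaving exactly the stated prefactor $\frac{\Gamma_{1}[Nm/2]\Gamma_{m}^{k}[(m+1)/2]}{\Gamma_{1}[n_{0}m/2]\prod_{i}\Gamma_{m}[(n_{i}+m+1)/2]}\prod_{i}|\mathbf{A}_{i}|^{n_{i}/2}$ and the multiple sum $S=\sum_{r_{1},\ldots,r_{k}}\frac{(Nm/2)_{R}}{r_{1}!\cdots r_{k}!}\prod_{i=1}^{k}{}_{1}^{r_{i}}Q_{1}[a_{i};b_{i};-\mathbf{A}_{i}]$. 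Finally, I would convert $S$ into the nested form (\ref{probbetaIIeq}) via the Pochhammer splitting $(c)_{R}=(c)_{r_{k}}(c+r_{k})_{r_{k-1}}\cdots(c+r_{2}+\cdots+r_{k})_{r_{1}}$ with $c=Nm/2$, which identifies precisely $h_{k}=(c)_{r_{k}}$ and $h_{j}=(c+\sum_{i=j+1}^{k}r_{i})_{r_{j}}$ and lets the sums be written as $k$ interlocked ${}_{1}^{r}P_{1}$ series of type (\ref{genhf}). The main obstacle is analytic rather than algebraic: justifying the term-by-term $t$-integration and the rearrangement of the $k$-fold zonal series into the nested hypergeometric shape requires the absolute convergence guaranteed by the ``suitable parameters'' hypothesis, whereas the congruence reduction, the zonal beta integral and the Pochhammer bookkeeping are then routine.
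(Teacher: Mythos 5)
Your proposal is correct, and it takes a genuinely different route from the paper's own proof. The paper integrates \emph{backwards, one matrix at a time}: starting with $\mathbf{0}<\mathbf{F}_{k}<\mathbf{A}_{k}$, it expands the coupling factor $\left(1+\sum_{j}\tr\mathbf{F}_{j}\right)^{-c}$ by a binomial series in $\tr\mathbf{F}_{k}$, applies the same congruence substitution $\mathbf{F}_{k}=\mathbf{A}_{k}^{1/2}\mathbf{Z}_{k}\mathbf{A}_{k}^{1/2}$ and the zonal beta integral of \citet[Th. 7.2.10]{mh:05}, and then proceeds by induction, so that each stage splits off one Pochhammer factor $h_{j}=(c+\sum_{i>j}r_{i})_{r_{j}}$ and the nested ${}_{1}^{r}P_{1}$ structure of (\ref{probbetaIIeq}) emerges directly. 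You instead decouple all $k$ matrices at the outset with the scalar Gamma representation $(1+\sum_{i}\tr\mathbf{F}_{i})^{-c}=\frac{1}{\Gamma_{1}[c]}\int_{0}^{\infty}t^{c-1}e^{-t}\prod_{i}\etr(-t\mathbf{F}_{i})\,dt$, factor the problem into $k$ independent one-matrix integrals $J_{i}(t)$ (evaluated by the same substitution and the same $b=(m+1)/2$ case of the zonal beta integral), and resum over $t$ at the end, obtaining the symmetric closed form $S=\sum_{r_{1},\ldots,r_{k}}\frac{(c)_{R}}{r_{1}!\cdots r_{k}!}\prod_{i}{}_{1}^{r_{i}}Q_{1}[a_{i};b_{i};-\mathbf{A}_{i}]$ with $R=r_{1}+\cdots+r_{k}$; your telescoping identity $(c)_{R}=(c)_{r_{k}}(c+r_{k})_{r_{k-1}}\cdots(c+r_{2}+\cdots+r_{k})_{r_{1}}$ then reproduces (\ref{probbetaIIeq}) exactly, so the two answers agree. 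What your route buys: the multiple series is manifestly symmetric in $(r_{1},\ldots,r_{k})$ (the nesting is exposed as bookkeeping rather than structure), and the binomial expansion at every stage is replaced by the entire series for $\etr$, concentrating all analytic difficulty in the single term-by-term $t$-integration, for which an explicit sufficient condition is available (e.g. $\sum_{i}\lambda_{\max}(\mathbf{A}_{i})<1$, since $|{}_{1}^{r}Q_{1}[a_{i};b_{i};-\mathbf{A}_{i}]|\leq{}_{1}^{r}Q_{1}[a_{i};b_{i};\mathbf{A}_{i}]$ dominates the absolute series by $\prod_{i}{}_{1}F_{1}(a_{i};b_{i};t\mathbf{A}_{i})$). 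What the paper's route buys: no auxiliary integral, staying entirely within the formalism (\ref{genhf}), at the cost of a convergence restriction at each of the $k$ stages --- precisely the ``suitable parameters'' caveat that both arguments invoke and neither makes fully explicit. Two minor points of hygiene: write $C_{\rho_{i}}(-t\mathbf{A}_{i}^{1/2}\mathbf{Y}_{i}\mathbf{A}_{i}^{1/2})=C_{\rho_{i}}(-t\mathbf{A}_{i}\mathbf{Y}_{i})$ and say why (zonal polynomials depend only on eigenvalues), and note that Tonelli justifies only the interchange of the $t$-integral with the matrix integrals; the interchange of the $t$-integral with the infinite zonal series is the step that genuinely consumes your absolute-convergence hypothesis, since the terms $C_{\rho}(-\mathbf{A}_{i})$ alternate in sign.
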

\begin{proof}
Let $c=Nm/2$, 
$$
  M = \frac{\Gamma_{1}[Nm/2]}{\Gamma_{1}[n_{0}m/2]\prod_{i=1}^{k}\Gamma_{m}[n_{i}/2]}, 
$$
$a_{i}=n_{i}/2, g_{i}=(n_{i}-m-1)/2$, and $b_{i}=(n_{i}+m+1)/2$, $i=1,\dots,k$. Consider backwards integration starting in $\mathbf{0}<\mathbf{F}_{k}<\mathbf{A}_{k}$. To avoid the appearance of generalized binomial expansions of zonal polynomials of sums of matrices, the integrand is written as
$$
  \prod_{j=1}^{k-1}|\mathbf{F}_{j}|^{g_{j}}|\mathbf{F}_{k}|^{g_{k}}\left(1+\displaystyle\sum_{j=1}^{k-1}\tr\mathbf{F}_{j} + \tr\mathbf{F}_{k}\right)^{-c}.
$$ 
Performing a convergent power expansion of the form $(w+x)^{-p}=\sum_{i=0}^{\infty}\frac{(p)_{i}w^{-p-i}}{i!}$ and a variable substitution $\mathbf{F}_{k}=\mathbf{A}_{k}^{1/2}\mathbf{Z}_{k}\mathbf{A}_{k}^{1/2}$, with Jacobian $(d\mathbf{F}_{k})=|\mathbf{A}_{k}|^{(m+1)/2}(d\mathbf{Z}_{k})$, lead to the application of \citet[Th. 7.2.10]{mh:05} and (\ref{genhf}).  
Thus,
$$\prod_{j=1}^{k-1}|\mathbf{F}_{j}|^{g_{j}}\int_{\mathbf{0}<\mathbf{F}_{k}<\mathbf{A}_{k}}|\mathbf{F}_{k}|^{g_{k}}\left(1+\displaystyle\sum_{j=1}^{k}\tr\mathbf{F}_{j}\right)^{-c}\left(d\mathbf{F}_{k}\right)=\frac{\Gamma_{m}[a_{k}]\Gamma_{m}[(m+1)/2]}{\Gamma_{m}[b_{k}]|\mathbf{A}_{k}|^{-a_{k}}}
  $$
\begin{equation}\label{prob1}
\hspace{1cm}\times \,     {}_{1}^{r_{k}}P_{1}\left(\frac{\prod_{j=1}^{k-1}|\mathbf{F}_{j}|^{g_{j}}\left(1+\displaystyle\sum_{j=1}^{k-1}\tr\mathbf{F}_{j}\right)^{-c-r_{k}}}{(c)_{r_{k}}^{-1}}:a_{k};b_{k}; -\mathbf{A}_{k}\right).
\end{equation}
Repeat a similar procedure to (\ref{prob1}) for the next integral on $\mathbf{0}<\mathbf{F}_{k-1}<\mathbf{A}_{k-1}$ by the corresponding splitting of the term indexed by $r_{k}$; thus the nested series ${}_{1}^{r_{k-1}}P_{1}\left[\cdot\right]$ is obtained. Explicitly, partial integration on $\mathbf{0}<\mathbf{F}_{k-1}<\mathbf{A}_{k-1},\mathbf{0}<\mathbf{F}_{k}<\mathbf{A}_{k}$ takes the form
$$
\frac{\prod_{i=k-1}^{k}\Gamma_{m}[a_{i}]\Gamma_{m}^{2}[(m+1)/2]}
{\prod_{i=k-1}^{k}\Gamma_{m}[b_{i}]}
  \prod_{i=k-1}^{k}|\mathbf{A}_{i}|^{a_{i}}\hspace{7cm}
$$
\begin{equation*}
\begin{tiny}
    \times \,     {}_{1}^{r_{k}}P_{1}\left(\frac{{}_{1}^{r_{k-1}}P_{1}\left(\frac{\prod_{j=1}^{k-2}|\mathbf{F}_{j}|^{g_{j}}\left(1+\displaystyle\sum_{j=1}^{k-2}\tr\mathbf{F}_{j}\right)^{-c-\sum_{i=k-1}^{k}r_{i}}}{\left(c+\sum_{i=k-1}^{k}r_{i}\right)_{r_{k-1}}^{-1}}:a_{k-1};b_{k-1}; -\mathbf{A}_{k-1}\right)}{(c)_{r_{k}}^{-1}}:\right.
\end{tiny}
\end{equation*}
$$
\hspace{10cm}\left.:a_{k};b_{k}; -\mathbf{A}_{k}\right).
$$
Induction on $r_{i}$, by a similar integration procedure leading to  (\ref{prob1}), provides the nested series ${}_{1}^{r_{k-2}}P_{1}\left[\cdot\right],\ldots,{}_{1}^{r_{2}}P_{1}\left[\cdot\right]$ for the probability on $\mathbf{0}<\mathbf{F}_{2}<\mathbf{A}_{2},\ldots,\mathbf{0}<\mathbf{F}_{k-1}<\mathbf{A}_{k-1},\mathbf{0}<\mathbf{F}_{k}<\mathbf{A}_{k}$. Finally, the last integral in $\mathbf{0}<\mathbf{F}_{1}<\mathbf{A}_{1}$ is given by:
$$\int_{\mathbf{0}<\mathbf{F}_{1}<\mathbf{A}_{1}} |\mathbf{F}_{1}|^{a_{1}}\left(1+\tr\mathbf{F}_{1}\right)^{-c-\sum_{i=2}^{k}r_{i}}(d\mathbf{F}_{1})=
$$
$$\frac{\Gamma_{m}[a_{1}]\Gamma_{m}[(m+1)/2]}{\Gamma_{m}[b_{1}]|\mathbf{A}_{1}|^{-a_{1}}}
  \quad{}_{1}^{r_{1}}P_{1}\left(\left(c+\sum_{i=2}^{k}r_{i}\right)_{r_{1}}:a_{1};b_{1}; \mathbf{A}_{1}\right),
  $$
  then the required probability (\ref{probbetaIIeq}) is obtained.
 \end{proof}

As usual in Wishart type probabilities, if $\lambda_{i}$ is the largest latent root of $\mathbf{F}_{i}$, then Theorem \ref{probbetaII} provides the distribution function  of $\lambda_{i}$, $P(\lambda_{i}<x)$, by taking $\mathbf{A}_{i}=x \mathbf{I}$ in (\ref{probbetaIIeq}).

We now focus on the \emph{multimatrix variate generalised Wishart
distribution} (\ref{mggwv}). This is the only distribution presented here that is not invariant under the spherical family. First note that (\ref{mggwv}) is indexed by $\int_{v>0}v^{Nm/2-1}h^{(r)}(v)dv$. It is a simple integral once the general derivatives are obtained. They can be found in \citet{cldggf:10} for the classical elliptical generators of Pearson, Kotz, Bessel, Jensen-Logistic and so on. For example, the Gaussian case of the multimatrix variate generalised Wishart distribution is obtained by taking the generator function $h(v)=(2\pi)^{-Nm/2}e^{-v/2}$, then 
$$
 \int_{v>0}v^{Nm/2-1}h^{(t)}(v)dv=\pi^{-Nm/2}\Gamma[Nm/2]\left(-\frac{1}{2}\right)^{t}.
$$
Excepting the extra summation, the procedure for Theorem \ref{probbetaII} can be reproduced next for probabilities on $m\times m$ positive definite matrices $\mathbf{W}_{i}$, $i=1,\ldots,k$.

\begin{theorem}\label{probwishart} Consider the hypothesis of Lemma \ref{ggW} and positive definite matrices $\mathbf{A}_{i} \in \Re^{m \times m}$, $i=1,\ldots,k$. Then we have the following probabilities:

1) Gaussian (independent) case: 
$$
P(\mathbf{0}<\mathbf{W}_{1}<\mathbf{A}_{1},\ldots,\mathbf{0}<\mathbf{W}_{k}<\mathbf{A}_{k},)=\hspace{7cm}
$$
\begin{equation}\label{probgenwishartgaussian}
\frac{\Gamma_{m}^{k}[(m+1)/2]\displaystyle\prod_{i=1}^{k}\left|\frac{1}{2}\mathbf{A}_{i}\right|^{n_{i}/2}}{\displaystyle\prod_{i=1}^{k}\Gamma_{m}[(n_{i}+m+1)/2]}
  \prod_{i=1}^{k}{}_{1}F_{1}\left[\frac{n_{i}}{2};\frac{n_{i}+m+1}{2};-\frac{1}{2}\mathbf{A}_{i}\right]
\end{equation}

2) General elliptical model, $k=1$ (\citet{clgfbn:16}): 
\begin{equation}\label{genM971}
P(\mathbf{0}<\mathbf{W}_{1}<\mathbf{A}_{1})=
\frac{\Gamma_{m}[\frac{m+1}{2}]|\mathbf{A}_{1}|^{\frac{n_{1}}{2}}}{\Gamma_{m}[\frac{n_{1}+m+1}{2}]\pi^{-\frac{n_{1}m}{2}}}
\sum_{t=0}^{\infty}\frac{h^{(t)}(0)}{t!}\ {}_{1}^{t}Q_{1}\left[\frac{n_{1}}{2};\frac{n_{1}+m+1}{2};\mathbf{A}_{1}\right]
\end{equation}
3) General elliptical model, for all $k> 1$: 
$$
P(\mathbf{0}<\mathbf{W}_{1}<\mathbf{A}_{1},\ldots,\mathbf{0}<\mathbf{W}_{k}<\mathbf{A}_{k},)=\hspace{7cm}
$$
$$
\frac{\pi^{(N-n_{0})m/2}\Gamma_{m}^{k}[(m+1)/2]\displaystyle\prod_{i=1}^{k}|\mathbf{A}_{i}|^{n_{i}/2}}{\displaystyle\prod_{i=1}^{k}\Gamma_{m}[(n_{i}+m+1)/2]}
\sum_{t=0}^{\infty}\frac{h^{(t)}(0)}{t!}\hspace{6cm}
$$
$$
  \times\, \sum_{r_{1}=0}^{t}\binom{t}{r_{1}}\,{}_{1}^{r_{1}}Q_{1}\left[\frac{n_{1}}{2};\frac{n_{1}+m+1}{2};\mathbf{A}_{1}\right]
 $$
$$
\times  \prod_{j=1}^{k-2}\left\{\sum_{r_{j+1}=0}^{t-\sum_{i=1}^{j}r_{i}}\binom{t-\sum_{i=1}^{j}r_{i}}{r_{j+1}}\,{}_{1}^{r_{j+1}}Q_{1}\left[\frac{n_{j+1}}{2};\frac{n_{j+1}+m+1}{2};\mathbf{A}_{j+1}\right]\right\}
$$
\begin{equation}\label{probgenwishartk}
\times\,{}_{1}^{r_{k}}Q_{1}\left[\frac{n_{k}}{2};\frac{n_{k}+m+1}{2};\mathbf{A}_{k}\right]
  \end{equation}
\end{theorem}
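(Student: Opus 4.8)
The plan is to handle the two genuinely elliptical cases (parts 2 and 3) by a single computation starting from the density (\ref{mggwv}) of Lemma \ref{ggW}, and to treat the Gaussian case (part 1) separately by exploiting the independence peculiar to the normal generator. The engine of the elliptical computation is the observation that each matrix factor can be reduced to the same bounded-cone integral already carried out in the proof of Theorem \ref{probbetaII}. Concretely, I would first expand the kernel about the origin as $h\!\left[\sum_{i=1}^{k}\tr\mathbf{W}_i\right]=\sum_{t=0}^{\infty}\frac{h^{(t)}(0)}{t!}\left(\sum_{i=1}^{k}\tr\mathbf{W}_i\right)^{t}$ and then apply the multinomial theorem to $\left(\sum_i\tr\mathbf{W}_i\right)^{t}$, so that the $t$-th term becomes a sum over compositions $r_1+\cdots+r_k=t$, with coefficient $\binom{t}{r_1,\dots,r_k}$, of the products $\prod_{i=1}^{k}(\tr\mathbf{W}_i)^{r_i}$. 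Since $h$ depends only on the \emph{total} trace and carries no additive constant --- unlike the $\left(1+\sum\tr\mathbf{F}_i\right)^{-c}$ kernel of Theorem \ref{probbetaII} --- this decouples the matrices completely, so the probability integral factorises into a product over $i$ of single-matrix integrals rather than producing the nested ${}_1^{r}P_1[\cdots]$ structure; the only residual coupling is the constraint $r_1+\cdots+r_k=t$, which I would encode exactly as in (\ref{probgenwishartk}) by writing the multinomial coefficient as the telescoping product $\binom{t}{r_1}\binom{t-r_1}{r_2}\cdots$ (its last factor being $\binom{r_k}{r_k}=1$).

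Next I would evaluate each single-matrix integral $\int_{\mathbf{0}<\mathbf{W}_i<\mathbf{A}_i}|\mathbf{W}_i|^{(n_i-m-1)/2}(\tr\mathbf{W}_i)^{r_i}\,(d\mathbf{W}_i)$ exactly as in Theorem \ref{probbetaII}: substitute $\mathbf{W}_i=\mathbf{A}_i^{1/2}\mathbf{Z}_i\mathbf{A}_i^{1/2}$ with Jacobian $|\mathbf{A}_i|^{(m+1)/2}(d\mathbf{Z}_i)$, so that $|\mathbf{W}_i|^{(n_i-m-1)/2}(d\mathbf{W}_i)=|\mathbf{A}_i|^{n_i/2}|\mathbf{Z}_i|^{(n_i-m-1)/2}(d\mathbf{Z}_i)$ and $\tr\mathbf{W}_i=\tr(\mathbf{A}_i\mathbf{Z}_i)$; expand $(\tr(\mathbf{A}_i\mathbf{Z}_i))^{r_i}=\sum_{\rho\in r_i}C_\rho(\mathbf{A}_i\mathbf{Z}_i)$ in zonal polynomials; and integrate term by term over $\mathbf{0}<\mathbf{Z}_i<\mathbf{I}$ by \citet[Th. 7.2.10]{mh:05}. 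Collecting the Pochhammer ratios into the notation (\ref{genpartsum}) turns each factor into $|\mathbf{A}_i|^{n_i/2}\,\frac{\Gamma_m[(m+1)/2]\,\Gamma_m[n_i/2]}{\Gamma_m[(n_i+m+1)/2]}\,{}_1^{r_i}Q_1[n_i/2;(n_i+m+1)/2;\mathbf{A}_i]$, whose $\Gamma_m[n_i/2]$ cancels the normalising constant $1/\Gamma_m[n_i/2]$ of (\ref{mggwv}). Reassembling the product over $i$ together with the sums over $t$ and the compositions then produces (\ref{probgenwishartk}) for $k>1$; specialising to $k=1$ collapses the composition sum and recovers the single series (\ref{genM971}), in agreement with \citet{clgfbn:16}.

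For the Gaussian case, labelled ``independent'' in the statement, I would not pass through (\ref{mggwv}) but use that a spherical normal $\mathbf{X}$ has mutually independent blocks $\mathbf{X}_i$, whence the $\mathbf{W}_i=\mathbf{X}_i'\mathbf{X}_i$ are independent Wishart matrices and $P(\mathbf{0}<\mathbf{W}_1<\mathbf{A}_1,\dots,\mathbf{0}<\mathbf{W}_k<\mathbf{A}_k)=\prod_{i=1}^{k}P(\mathbf{0}<\mathbf{W}_i<\mathbf{A}_i)$. Each factor is the classical incomplete-Wishart integral, obtained by the same substitution $\mathbf{W}_i=\mathbf{A}_i^{1/2}\mathbf{Z}_i\mathbf{A}_i^{1/2}$, expansion of $e^{-\tr(\mathbf{A}_i\mathbf{Z}_i)/2}$ and integration via \citet[Th. 7.2.10]{mh:05}; using the homogeneity $C_\rho(-\tfrac12\mathbf{A}_i)=(-\tfrac12)^{|\rho|}C_\rho(\mathbf{A}_i)$ to resum the series into a matrix ${}_1F_1$ and absorbing the powers of two into $|\tfrac12\mathbf{A}_i|^{n_i/2}$ gives exactly (\ref{probgenwishartgaussian}).

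The step I expect to be the main obstacle is analytic rather than algebraic: justifying the interchange of the infinite expansion of $h$ with the matrix integrations (and with the composition sums), which is precisely where the hypothesis of ``suitable parameters for convergence of (\ref{genhf})'' is needed. One must verify that term-by-term integration is legitimate on the bounded cones $\mathbf{0}<\mathbf{W}_i<\mathbf{A}_i$ and that the resulting triple series in $t$, in the compositions, and in the partitions $\rho$ converges. A secondary, purely combinatorial, difficulty is checking that the telescoping binomials reproduce the multinomial coefficient with the correct summation ranges $0\le r_{j+1}\le t-\sum_{i\le j}r_i$, so as to land on the exact form (\ref{probgenwishartk}); this is routine but must be done with care.
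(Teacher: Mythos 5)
Your proposal follows the paper's proof essentially step for step: the paper likewise Taylor-expands the generator about zero, applies the binomial theorem recurrently (your telescoping multinomial over compositions $r_{1}+\cdots+r_{k}=t$) and evaluates each factor through the same bounded-cone integral $\int_{\mathbf{0}<\mathbf{W}<\mathbf{A}}|\mathbf{W}|^{a-(m+1)/2}\tr^{r}(\mathbf{W})(d\mathbf{W})$ expressed via ${}_{1}^{r}Q_{1}$, while the Gaussian case is handled by $k$ independent applications of the classical incomplete Wishart probability, exactly as in your factorization. The only cosmetic difference is that you re-derive the $k=1$ series (\ref{genM971}) and the single-matrix Gaussian integral from \citet[Th. 7.2.10]{mh:05}, where the paper simply cites \citet[Th. 6 and Cor. 7]{clgfbn:16} and \citet[Th. 9.7.1]{mh:05}.
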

\begin{proof}
(\ref{probgenwishartgaussian}) is derived by using the generator function 
$h(v)=(2\pi)^{-Nm/2}e^{-v/2}$ in (\ref{mggwv}), and hence the integrals follows by $k$ independent application of the isotropic version of \citet[Cor. 7]{clgfbn:16} or \citet[Th. 9.7.1 ]{mh:05}. (\ref{genM971}) was derived in \citet[th.6]{clgfbn:16} as a generalization of the Gaussian case given in \citet[Th. 9.7.1 ]{mh:05}. 

Finally, (\ref{probgenwishartk}) follows by simple induction on $k$. It requires a recurrent use of the binomial theorem, the representation (\ref{genpartsum}) and the known integral 
$$
  \int_{\mathbf{0}<\mathbf{W}<\mathbf{A}}|\mathbf{W}|^{a -\frac{m+1}{2}}\tr^{r}(\mathbf{W})(d\mathbf{W}) \hspace{6cm}
$$
$$ 
  \hspace{4cm}
  = \frac{\Gamma_{m}[a]\Gamma_{m}[(m+1)/2]}{\Gamma_{m}[a+(m+1)/2]|\mathbf{A}|^{-a}}\, {}_{1}^{r}Q_{1}\left(a;a+\frac{m+1}{2};\mathbf{A}\right).
$$
\end{proof}

\begin{lemma}\label{ggbI}
In the setting of Lemma \ref{ggW} and the cone probabilities here derived, the only feasible lower probability is reached for $k=1$ and is given by:
$$P(\mathbf{0}<\mathbf{B}_{1}<\mathbf{A}_{1}<\mathbf{I})=\frac{\Gamma_{m}[n_{1}/2]\Gamma_{m}[(m+1))/2]}{\Gamma_{m}[(n_{1}+m+1))/2]}\hspace{6cm}$$
\begin{equation}\label{B1}
\times|\mathbf{A}_{1}|^{n_{1}/2}\sum_{t=0}^{\infty}\binom{n_{0}m/2-1}{t}{}_{1}^{t}Q_{1}\left[n_{1}/2;(n_{1}+m+1)/2;-\mathbf{A}_{1}\right].
  \end{equation}
For $k>1$ the probability $P(\mathbf{0}<\mathbf{B}_{i} < \mathbf{A}_{i} < \mathbf{I},\ldots,\mathbf{0} <\mathbf{B}_{k}<\mathbf{A}_{k} <\mathbf{I})$ turns in terms of invariant polynomials and it cannot be computable. 
\end{lemma}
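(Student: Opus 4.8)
The plan is to integrate the multimatrix variate beta type I density (\ref{b1}) over the product of cones $\{\mathbf{0}<\mathbf{B}_{i}<\mathbf{A}_{i}\}$, the key observation being that for $k=1$ the trace--coupling collapses to a single power. Write $c=Nm/2$, $a_{1}=n_{1}/2$, $g_{1}=(n_{1}-m-1)/2$ and $b_{1}=a_{1}+(m+1)/2=(n_{1}+m+1)/2$, and recall that $N=n_{0}+n_{1}$ for $k=1$. Since
$$1+\frac{\tr\mathbf{B}_{1}}{1-\tr\mathbf{B}_{1}}=\frac{1}{1-\tr\mathbf{B}_{1}},$$
the two trace--dependent factors of (\ref{b1}) combine as $(1-\tr\mathbf{B}_{1})^{c}(1-\tr\mathbf{B}_{1})^{-n_{1}m/2-1}=(1-\tr\mathbf{B}_{1})^{n_{0}m/2-1}$, so the density reduces to a constant multiple of $|\mathbf{B}_{1}|^{g_{1}}(1-\tr\mathbf{B}_{1})^{n_{0}m/2-1}$.

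I would then expand the surviving factor by the generalised binomial theorem,
$$(1-\tr\mathbf{B}_{1})^{n_{0}m/2-1}=\sum_{t=0}^{\infty}\binom{n_{0}m/2-1}{t}(-\tr\mathbf{B}_{1})^{t},$$
which converges on the integration region because $\tr\mathbf{B}_{1}<1$ there, the interchange of summation and integration being justified under the stated convergence hypothesis. Integrating term by term over $\mathbf{0}<\mathbf{B}_{1}<\mathbf{A}_{1}$ and invoking the trace--power integral used in the proof of Theorem \ref{probwishart} with $a=n_{1}/2$ converts each summand into a partial series ${}_{1}^{t}Q_{1}(a_{1};b_{1};\mathbf{A}_{1})$ of type (\ref{genpartsum}), carrying the factor $\Gamma_{m}[a_{1}]\Gamma_{m}[(m+1)/2]|\mathbf{A}_{1}|^{a_{1}}/\Gamma_{m}[b_{1}]$.

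Finally I would absorb the sign by homogeneity: for $\rho\in t$ the zonal polynomial $C_{\rho}$ has degree $t$, whence ${}_{1}^{t}Q_{1}(a_{1};b_{1};-\mathbf{A}_{1})=(-1)^{t}\,{}_{1}^{t}Q_{1}(a_{1};b_{1};\mathbf{A}_{1})$, turning $(-1)^{t}\binom{n_{0}m/2-1}{t}{}_{1}^{t}Q_{1}(\cdots;\mathbf{A}_{1})$ into $\binom{n_{0}m/2-1}{t}{}_{1}^{t}Q_{1}(\cdots;-\mathbf{A}_{1})$. Collecting the normalising constant of (\ref{b1}) with the Gamma factors produced by the trace integral then yields the single sum (\ref{B1}).

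The delicate part is the claim for $k>1$, and this is where I expect the real obstacle. The telescoping above is special to $k=1$; for $k>1$ the factor $[1+\sum_{i}\tr\mathbf{B}_{i}/(1-\tr\mathbf{B}_{i})]^{-Nm/2}$ does not separate across $\mathbf{B}_{1},\dots,\mathbf{B}_{k}$, so the clean reduction to one cone integral per matrix is unavailable. Following instead the peeling strategy of Theorem \ref{probbetaII}, the factors $(1-\tr\mathbf{B}_{i})^{-n_{i}m/2-1}$ entangle with the power of the sum, and carrying out the several cone integrals forces the expansion of zonal polynomials of the distinct matrix arguments $\mathbf{A}_{1},\dots,\mathbf{A}_{k}$ into invariant polynomials of several matrix arguments in the sense of \citet{d:79}. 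By \citet{cl:16} these lack a Laplace--Beltrami recursion, so no computable closed form in zonal polynomials survives; making this non-computability precise is the crux of the statement.
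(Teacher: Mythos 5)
Your proposal matches the paper's proof essentially step for step: the same collapse of the $k=1$ density to $|\mathbf{B}_{1}|^{(n_{1}-m-1)/2}\left(1-\tr\mathbf{B}_{1}\right)^{n_{0}m/2-1}$, the same binomial expansion followed by the same trace--power cone integral yielding the ${}_{1}^{t}Q_{1}$ terms (your sign absorption via homogeneity of $C_{\rho}$ is merely left implicit in the paper), and the same appeal to \citet{cl:16} for the $k>1$ non-computability. The only detail the paper's proof makes explicit that you should add is the side condition $\tr\mathbf{A}_{1}\leq 1$, since $\mathbf{A}_{1}<\mathbf{I}$ alone does not keep the cone $\mathbf{0}<\mathbf{B}_{1}<\mathbf{A}_{1}$ inside the support $\tr\mathbf{B}_{1}\leq 1$, on which both your convergence claim and the validity of the term-by-term integral rely.
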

\begin{proof}
Let $k=1$ and $\mathbf{A}_{1}$, such that $\mathbf{0}<\mathbf{B}_{1}<\mathbf{A}_{1}<\mathbf{I}$ and $\tr\mathbf{A}_{1}\leq 1$. The integrand in (\ref{b1}) simplifies to $|\mathbf{B}_{1}|^{(n_{1}-m-1)/2}\left(1-
  \tr \mathbf{B}_{1}\right)^{n_{0}m/2-1}$. Then, the result (\ref{B1}) follows after binomial theorem and application of 
$$
  \int_{\mathbf{0}<\mathbf{Y}<\mathbf{X}}|\mathbf{Y}|^{a-\frac{m+1}{2}}(\tr\mathbf{Y})^{r}(d\mathbf{Y}) \hspace{6cm}
$$ 
$$ \hspace{4cm}
  =\frac{\Gamma_{m}[a]\Gamma_{m}[(m+1)/2]}{\Gamma_{m}[a+(m+1)/2]|\mathbf{X}|^{-a}} \, {}_{1}^{r}Q_{1}\left(a;a+\frac{m+1}{2};\mathbf{X}\right).
$$ 
For $k > 1\,$ appear product of powers of traces arising invariant polynomials in the multiple integration, then the computation turns impossible by a similar Laplace Beltrami operator computation of zonal polynomials \citep{cl:16}.  
\end{proof}

We end this section in the context of real normed division algebras. We refer to \citet{b:02} and \citet{dggj:13} and the references therein for a complete exposition of the topic. For our purposes we index the four normed division algebras by the real dimension $\beta$, where $\beta=1$, stands for Real; $\beta=2$, for Complex; $\beta=4$, for Quaternionic; and $\beta=8$, for Octonion. Other notations for the algebras are given by $\alpha = 2/\beta$, see \citet{er:05}.

For the sequel some notations and definitions are required. For an understandable comparison with the real case in Section \ref{sec:2}, we provide a similar complete exposition for the real normed division algebras. Let ${\mathcal L}^{\beta}_{m,n}$ be the linear space of all $n \times m$ matrices of rank $m
\leq n$ over a real finite-dimensional normed division algebra $\mathfrak{F}$ with $m$ distinct positive singular values. Let $\mathfrak{F}^{n \times
m}$ be the set of all $n \times m$ matrices over $\mathfrak{F}$. The dimension of
$\mathfrak{F}^{n \times m}$ over $\Re$ is $\beta mn$. Let $\mathbf{A} \in \mathfrak{F}^{n
\times m}$, then $\mathbf{A}^{H} = \overline{\mathbf{A}}^{T}$ denotes the usual conjugate
transpose.

The set of matrices $\mathbf{H}_{1} \in \mathfrak{F}^{n \times m}$ such that
$\mathbf{H}_{1}^{H}\mathbf{H}_{1} = \mathbf{I}_{m}$ is a manifold denoted ${\mathcal
V}_{m,n}^{\beta}$, is termed the \emph{Stiefel manifold} ($\mathbf{H}_{1}$ is also known as
\emph{semi-orthogonal} ($\beta = 1$), \emph{semi-unitary} ($\beta = 2$), \emph{semi-symplectic}
($\beta = 4$) and \emph{semi-exceptional type} ($\beta = 8$) matrices, see \citet{dm:99}). The
dimension of $\mathcal{V}_{m,n}^{\beta}$ over $\Re$ is $[\beta mn - m(m-1)\beta/2 -m]$. In
particular, ${\mathcal V}_{m,m}^{\beta}$ with dimension over $\Re$, $[m(m+1)\beta/2 - m]$, is
the maximal compact subgroup $\mathfrak{U}^{\beta}(m)$ of ${\mathcal L}^{\beta}_{m,m}$ and
consists of all matrices $\mathbf{H} \in \mathfrak{F}^{m \times m}$ such that
$\mathbf{H}^{H}\mathbf{H} = \mathbf{I}_{m}$. Therefore, $\mathfrak{U}^{\beta}(m)$ is the
\emph{real orthogonal group} $\mathcal{O}(m)$ ($\beta = 1$), the \emph{unitary group}
$\mathcal{U}(m)$ ($\beta = 2$), \emph{compact symplectic group} $\mathcal{S}p(m)$ ($\beta = 4$)
or \emph{exceptional type matrices} $\mathcal{O}o(m)$ ($\beta = 8$), for $\mathfrak{F} = \Re$,
$\mathfrak{C}$, $\mathfrak{H}$ or $\mathfrak{O}$, respectively.

We denote by ${\mathfrak S}_{m}^{\beta}$ the real vector space of all $\mathbf{S} \in
\mathfrak{F}^{m \times m}$ such that $\mathbf{S} = \mathbf{S}^{H}$. Let
$\mathfrak{P}_{m}^{\beta}$ be the \emph{cone of positive definite matrices} $\mathbf{S} \in
\mathfrak{F}^{m \times m}$; then $\mathfrak{P}_{m}^{\beta}$ is an open subset of ${\mathfrak
S}_{m}^{\beta}$. Over $\Re$, ${\mathfrak S}_{m}^{\beta}$ consist of \emph{symmetric} matrices;
over $\mathfrak{C}$, \emph{Hermitian} matrices; over $\mathfrak{H}$, \emph{quaternionic
Hermitian} matrices (also termed \emph{self-dual matrices}) and over $\mathfrak{O}$,
\emph{octonionic Hermitian} matrices. Generically, the elements of $\mathfrak{S}_{m}^{\beta}$
are termed
 \textbf{Hermitian matrices}, irrespective of the nature of $\mathfrak{F}$. The
dimension of $\mathfrak{S}_{m}^{\beta}$ over $\Re$ is $[m(m-1)\beta+2m]/2$.

Let $\mathfrak{D}_{m}^{\beta}$ be the \emph{diagonal subgroup} of $\mathcal{L}_{m,m}^{\beta}$
consisting of all $\mathbf{D} \in \mathfrak{F}^{m \times m}$, $\mathbf{D} = \diag(d_{1},
\dots,d_{m})$.

For any matrix $\mathbf{X} \in \mathfrak{F}^{n \times m}$, $d\mathbf{X}$ denotes the\emph{
matrix of differentials} $(dx_{ij})$. Finally, we define the \emph{measure} or volume element
$(d\mathbf{X})$ when $\mathbf{X} \in \mathfrak{F}^{m \times n}, \mathfrak{S}_{m}^{\beta}$,
$\mathfrak{D}_{m}^{\beta}$ or $\mathcal{V}_{m,n}^{\beta}$.

If $\mathbf{X} \in \mathfrak{F}^{n \times m}$ then $(d\mathbf{X})$ (the Lebesgue measure in
$\mathfrak{F}^{n \times m}$) denotes the exterior product of the $\beta mn$ functionally
independent variables
$$
  (d\mathbf{X}) = \bigwedge_{i = 1}^{n}\bigwedge_{j = 1}^{m}dx_{ij} \quad \mbox{ where }
    \quad dx_{ij} = \bigwedge_{r = 1}^{\beta}dx_{ij}^{(r)}.
$$

If $\mathbf{S} \in \mathfrak{S}_{m}^{\beta}$ (or $\mathbf{S} \in \mathfrak{T}_{L}^{\beta}(m)$)
then $(d\mathbf{S})$ (the Lebesgue measure in $\mathfrak{S}_{m}^{\beta}$ or in
$\mathfrak{T}_{L}^{\beta}(m)$) denotes the exterior product of the $m(m+1)\beta/2$ functionally
independent variables (or denotes the exterior product of the $m(m-1)\beta/2 + m$ functionally
independent variables, if $s_{ii} \in \Re$ for all $i = 1, \dots, m$)
$$
  (d\mathbf{S}) = \left\{
                    \begin{array}{ll}
                      \displaystyle\bigwedge_{i \leq j}^{m}\bigwedge_{r = 1}^{\beta}ds_{ij}^{(r)}, &  \\
                      \displaystyle\bigwedge_{i=1}^{m} ds_{ii}\bigwedge_{i < j}^{m}\bigwedge_{r = 1}^{\beta}ds_{ij}^{(r)}, &
                       \hbox{if } s_{ii} \in \Re.
                    \end{array}
                  \right.
$$
The context generally establishes the conditions on the elements of $\mathbf{S}$, that is, if
$s_{ij} \in \Re$, $\in \mathfrak{C}$, $\in \mathfrak{H}$ or $ \in \mathfrak{O}$. It is
considered that
$$
  (d\mathbf{S}) = \bigwedge_{i \leq j}^{m}\bigwedge_{r = 1}^{\beta}ds_{ij}^{(r)}
   \equiv \bigwedge_{i=1}^{m} ds_{ii}\bigwedge_{i < j}^{m}\bigwedge_{r =
1}^{\beta}ds_{ij}^{(r)}.
$$
Observe, too, that for the Lebesgue measure $(d\mathbf{S})$ defined thus, it is required that
$\mathbf{S} \in \mathfrak{P}_{m}^{\beta}$, that is, $\mathbf{S}$ must be a non singular
Hermitian matrix (Hermitian positive definite matrix).

If $\mathbf{\Lambda} \in \mathfrak{D}_{m}^{\beta}$ then $(d\mathbf{\Lambda})$ (the Legesgue
measure in $\mathfrak{D}_{m}^{\beta}$) denotes the exterior product of the $\beta m$
functionally independent variables
$$
  (d\mathbf{\Lambda}) = \bigwedge_{i = 1}^{n}\bigwedge_{r = 1}^{\beta}d\lambda_{i}^{(r)}.
$$

If $\mathbf{H}_{1} \in \mathcal{V}_{m,n}^{\beta}$ then
$$
  (\mathbf{H}^{H}_{1}d\mathbf{H}_{1}) = \bigwedge_{i=1}^{n} \bigwedge_{j =i+1}^{m}
  \mathbf{h}_{j}^{H}d\mathbf{h}_{i}.
$$
where $\mathbf{H} = (\mathbf{H}_{1}|\mathbf{H}_{2}) = (\mathbf{h}_{1}, \dots,
\mathbf{h}_{m}|\mathbf{h}_{m+1}, \dots, \mathbf{h}_{n}) \in \mathfrak{U}^{\beta}(m)$. It can be
proved that this differential form does not depend on the choice of the $\mathbf{H}_{2}$
matrix. When $m = 1$; $\mathcal{V}^{\beta}_{1,n}$ defines the unit sphere in
$\mathfrak{F}^{n}$. This is, of course, an $(n-1)\beta$- dimensional surface in
$\mathfrak{F}^{n}$. When $m = n$ and denoting $\mathbf{H}_{1}$ by $\mathbf{H}$,
$(\mathbf{H}^{H}d\mathbf{H})$ is termed the \emph{Haar measure} on $\mathfrak{U}^{\beta}(m)$.

The surface area or volume of the Stiefel manifold $\mathcal{V}^{\beta}_{m,n}$ is
\begin{equation}\label{vol}
    \Vol(\mathcal{V}^{\beta}_{m,n}) = \int_{\mathbf{H}_{1} \in
  \mathcal{V}^{\beta}_{m,n}} (\mathbf{H}^{H}_{1}d\mathbf{H}_{1}) =
  \frac{2^{m}\pi^{mn\beta/2}}{\Gamma^{\beta}_{m}[n\beta/2]},
\end{equation}
where $\Gamma^{\beta}_{m}[a]$ denotes the multivariate Gamma function for the space
$\mathfrak{S}_{m}^{\beta}$, and is defined by
\begin{eqnarray*}
  \Gamma_{m}^{\beta}[a] &=& \displaystyle\int_{\mathbf{A} \in \mathfrak{P}_{m}^{\beta}}
  \etr\{-\mathbf{A}\} |\mathbf{A}|^{a-(m-1)\beta/2 - 1}(d\mathbf{A}) \\
&=& \pi^{m(m-1)\beta/4}\displaystyle\prod_{i=1}^{m} \Gamma[a-(i-1)\beta/2],
\end{eqnarray*}
where $\etr(\cdot) = \exp(\tr(\cdot))$, $|\cdot|$ denotes the determinant and $\re(a)> (m-1)\beta/2$, see \citet{gr:87}. If $\mathbf{A}\in \mathcal{L}_{m,n}^{\beta}$ then by $\vec (\mathbf{A})$ we mean the $mn \times 1$ vector formed by stacking the columns of
$\mathbf{A}$ under each other.

Now, generalized statistical theory of shape has been developed by the authors in a number of settings: SVD, polar, PseudoWishart, QR, affine, forms, Eulerian, etc.. In particular, the real configuration or affine density was set in the addressed families of elliptically countored distributions in \citet{cldggf:10} as a revision and generalization of the Gaussian case given by \citet{gm:93}. Then \citet{dgcl:16} proposed the affine shape theory under the general approach for real normed division algebras. The configuration or affine shape filters are interested in removing geometrical information about translation, scaling, rotation, reflection and/or uniform shearing of random objects summarized by matrices in the addressed four real normed divison algebras. Explicitly, two figures $\mathbf{X}\in {\mathcal L}^{\beta}_{K,N}$ have the same affine shape if $\mathbf{X}_{1}=\mathbf{X}\mathbf{E}+\mathbf{1}_{N}e^{*}$, for some translation $e\in{\mathcal L}^{\beta}_{1,K}$ and $\mathbf{E}\in {\mathcal L}^{\beta}_{K,K}$. Then the $(N-1)\times K$ configuration matrix $\mathbf{U}=(\mathbf{I}|(\mathbf{Y}_{2}\mathbf{Y}_{1}^{-1})')'$ compressing the meaningful geometrical information of the original $N\times K$ matrix $\mathbf{Z}$, is obtained in the sequence of filtering geometrical stages $\mathbf{L}\mathbf{Z}=\mathbf{Y}=\mathbf{U}\mathbf{E}$. Here $\mathbf{Y}=(\mathbf{Y}_{1}'|\mathbf{Y}_{2}')'$ and $\mathbf{L}$ is a subHelmert matrix, see \citet{dgcl:16}, for details. 
Now, a similar definition to (\ref{elliptical}) emerges for real normed division algebras. We say that $\mathbf{X}\in {\mathcal L}^{\beta}_{m,n}$ has the following multivariate elliptically contoured distribution for real normed division algebras respect to the Lebesgue measure:
\begin{equation}\label{RNDAelliptical}
 F_{_{\mathbf{X}}} (\mathbf{X}) = |\mathbf{\Sigma}|^{-\beta m/2}|\mathbf{\Theta}|^{-\beta n/2}
  h\left\{\tr\left[\mathbf{\Theta}^{-1}(\mathbf{X}-\boldsymbol{\mu})^{*}\mathbf{\Sigma}^{-1}(\mathbf{X}-
  \boldsymbol{\mu})\right]\right\},
\end{equation}
where $\boldsymbol{\mu}\in {\mathcal L}^{\beta}_{m,n}$, $\boldsymbol{\Sigma}\in  \mathfrak{P}_{n}^{\beta}$, $\boldsymbol{\Theta}\in \mathfrak{P}_{n}^{\beta}$, and generator function $h\mbox{: } \mathfrak{F} \to [0,\infty)$, satisfies $\int_{\mathfrak{P}_{1}^{\beta}} u^{\beta nm-1}h(u^{2})du < \infty$. This fact will be denoted by $\mathbf{X}\sim \mathcal{E}_{n \times m}^{\beta}(\boldsymbol{\mu}
,\boldsymbol{\Sigma}\otimes\boldsymbol{\Theta}; h)$ Finally, for a convergent Taylor series of $h(\cdot)$, if $\mathbf{Y}\sim \mathcal{E}_{N-1 \times K}^{\beta}(\boldsymbol{\mu}\boldsymbol{\Theta}^{-1/2}
,\boldsymbol{\Sigma}\otimes\mathbf{I}_{K}, h)$, the affine shape density of $\mathbf{U}$ is given by
\begin{equation}\label{config}
g(\mathbf{U},t,r)\gamma_{t,r},
  \end{equation}
with
$$g(\mathbf{U},t,r)=\frac{\pi^{\beta K^{2}/2}\Gamma_{K}^{\beta}[\beta(N-1)/2]}{\Gamma_{K}^{\beta}[\beta K/2]|\boldsymbol{\Sigma}|^{\beta K/2}|\mathbf{U}^{*}\boldsymbol{\Sigma}^{-1}\mathbf{U}|^{\beta(N-1)/2}}\sum_{t=0}^{\infty}\frac{1}{t!\Gamma[K(N-1)/2+t]}\hspace{6cm}$$
\begin{equation}\label{Affine2}
\times\sum_{r=0}^{\infty}\frac{\tr^{r}\boldsymbol{\Omega}}{r!}\sum_{\tau}\frac{\left(\beta(N-1)/2\right)_{\tau}^{\beta}}{\left(\beta K/2\right)_{\tau}^{\beta}}C_{\tau}^{\beta}\left(
\mathbf{U}^{*}\Omega\boldsymbol{\Sigma}^{-1}\mathbf{U}
(\mathbf{U}^{*}\boldsymbol{\Sigma}^{-1}\mathbf{U})^{-1}\right),
  \end{equation}
and 
\begin{equation}\label{intAffine}
\gamma_{t,r}=\int_{\mathfrak{v}_{1}^{\beta}} v^{\beta K(N-1)/2+t-1}h^{(2t+r)}(v)dv < \infty,
  \end{equation}
$\boldsymbol{\Sigma}=\mathbf{L}\boldsymbol{\Sigma}_{\mathbf{X}}\mathbf{L}^{*}$, $\boldsymbol{\mu}=\mathbf{L}\boldsymbol{\mu}_{\mathbf{X}}$,  $\boldsymbol{\Omega}=\boldsymbol{\Sigma}^{-1}\boldsymbol{\mu}\boldsymbol{\Theta}\boldsymbol{\mu}^{*}$.

Now, in the computational context of this Section, the distribution (\ref{config}) is completely feasible; \citet{cldggf:10} and \citet{dgcl:16} have provided a number of applications. However, the density requires the integral (\ref{intAffine}) which is set in terms of the $2t+r$ general derivative of the kernel function $h(\cdot)$. Inspired by Theorem \ref{probwishart}, we can avoid integration and non null evaluation of the derivative in (\ref{intAffine}) by a simple computation of $h^{(2t+r)}(0)$. Next result provides the solution for the three models given by \citet{dggj:13}.

\begin{theorem}\label{th:newkernelintegral}
The affine shape density of $\mathbf{U}$ for Gaussian, Pearson type VII and II models is given by
\begin{equation}\label{config2}
g(\mathbf{U},t,r)\zeta_{t,r},
  \end{equation}
    where:
    \begin{itemize}
        \item Gaussian or Hermitian:
        $$
         \zeta_{t,r}=(2\beta^{-1})^{t+\beta K(N-1)/2}\Gamma[t+\beta K(N-1)/2]h^{(2t+r)}(0),
        $$
        with $h(y)=(2\pi\beta^{-1})^{-\beta K(N-1)/2}\exp(-\beta y/2)$.
        \item T type I or Pearson type VII: 
        $$
          \zeta_{t,r}=\frac{\beta^{-t-\beta K(N-1)/2}\Gamma[t+\beta K(N-1)/2]\Gamma[t+r+\beta \nu/2]}{\Gamma[2t+r+\beta(K(N-1)+\nu)/2]}h^{(2t+r)}(0),
        $$
        with 
        $$
          h(y)=\frac{\Gamma_{1}^{\beta}[\beta(K(N-1)+\nu)/2]}{(\pi\beta^{-1})^{\beta K(N-1)/2} \Gamma_{1}^{\beta}[\beta\nu/2]} \left(1+\beta y\right)^{-\beta(K(N-1)+\nu)/2}.
        $$
        \item Gegenbauer type I or Pearson type II: 
        $$
          \zeta_{t,r}=\frac{\Gamma[t+\beta K(N-1)/2]\Gamma[r+t-\beta K(N-1)/2-\beta q]}{(-\beta)^{t+\beta K(N-1)/2}\Gamma[2t+r-\beta q]}h^{(2t+r)}(0),
        $$ 
         with 
        $$
          h(y)=\frac{\Gamma_{1}^{\beta}[\beta K(N-1)/2+\beta q+1]}{(\pi\beta^{-1})^{\beta K(N-1)/2}\Gamma_{1}^{\beta}[\beta q+1]}\left(1-\beta y\right)^{\beta q}.
        $$
    \end{itemize}
\end{theorem}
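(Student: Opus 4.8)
The plan is to treat the three generators on a common footing. For each I would compute the full derivative $h^{(2t+r)}(v)$ in closed form, observe that its $v$-dependent part factorises cleanly from a multiplicative constant that is exactly $h^{(2t+r)}(0)$, and then evaluate the remaining elementary scalar integral in (\ref{intAffine}). This is precisely the mechanism already exhibited for the Gaussian generator just before Theorem \ref{probwishart}: because each of these kernels reproduces itself under differentiation up to a constant, the integration against $v^{\beta K(N-1)/2+t-1}$ collapses to a single Gamma or Beta evaluation, and the whole of $\gamma_{t,r}$ can be rewritten through $h^{(2t+r)}(0)$, giving $\zeta_{t,r}$ with no surviving integral or derivative evaluation at a nonzero point. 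The matrix part has already been absorbed into $g(\mathbf{U},t,r)$ in (\ref{Affine2}), so only the scalar $v$-integral is at stake.

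Concretely, set $a=\beta K(N-1)/2+t$. The Gaussian case is immediate: since $h^{(s)}(v)=(2\pi\beta^{-1})^{-\beta K(N-1)/2}(-\beta/2)^{s}e^{-\beta v/2}$, the factor $(-\beta/2)^{2t+r}$ is common to $h^{(2t+r)}(v)$ and to $h^{(2t+r)}(0)$, while $\int_{v>0}v^{a-1}e^{-\beta v/2}\,dv=\Gamma[a](2\beta^{-1})^{a}$ yields the stated $\zeta_{t,r}$. For Pearson type VII I would differentiate $h(v)=C(1+\beta v)^{-p}$ with $p=\beta(K(N-1)+\nu)/2$, obtaining $h^{(s)}(v)=C(-1)^{s}(p)_{s}\beta^{s}(1+\beta v)^{-p-s}$; again the constant $C(-1)^{2t+r}(p)_{2t+r}\beta^{2t+r}$ is exactly $h^{(2t+r)}(0)$, and the residual integral is the Beta integral $\int_{v>0}v^{a-1}(1+\beta v)^{-p-2t-r}\,dv=\beta^{-a}\Gamma[a]\Gamma[b-a]/\Gamma[b]$ with $b=p+2t+r$ and $b-a=\beta\nu/2+t+r$. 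Collecting the Gamma factors reproduces the displayed $\zeta_{t,r}$ for the T-type-I model.

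For Pearson type II the cleanest route is analytic continuation of the type VII result rather than a fresh integration. The kernel $h(v)=D(1-\beta v)^{\beta q}=D\bigl(1+(-\beta)v\bigr)^{-(-\beta q)}$ is formally the type VII kernel with $\beta$ replaced by $-\beta$ in the binomial base and with the exponent $p=\beta(K(N-1)+\nu)/2$ continued to $-\beta q$, equivalently $\beta\nu/2\mapsto-\beta K(N-1)/2-\beta q$. Carrying exactly these replacements through the type VII closed form sends $\beta^{-t-\beta K(N-1)/2}\mapsto(-\beta)^{-t-\beta K(N-1)/2}$, $\Gamma[t+r+\beta\nu/2]\mapsto\Gamma[r+t-\beta K(N-1)/2-\beta q]$ and $\Gamma[2t+r+\beta(K(N-1)+\nu)/2]\mapsto\Gamma[2t+r-\beta q]$, which is precisely the Gegenbauer-type-I expression. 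The same answer is reached directly from the truncated Beta integral for $h(v)=D(1-\beta v)^{\beta q}$ on $0<v<\beta^{-1}$ followed by the reflection formula $\Gamma[z]\Gamma[1-z]=\pi/\sin\pi z$ applied to the resulting factors.

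The main obstacle I anticipate is justifying this last continuation. Unlike type VII, the type II kernel has bounded support and, after $2t+r$ differentiations, an integrand behaving like $(1-\beta v)^{\beta q-2t-r}$ near $v=\beta^{-1}$; the exponent drops below $-1$ once $2t+r>\beta q+1$, so the integral in (\ref{intAffine}) literally diverges for all but finitely many summation indices $(t,r)$ (it truncates only in the degenerate case where $\beta q$ is a nonnegative integer and $h^{(2t+r)}(0)$ eventually vanishes). One therefore cannot simply quote the Beta integral: the identity must be read as the meromorphic continuation of the Gamma-function side, and I would need to verify that the poles of $\Gamma[r+t-\beta K(N-1)/2-\beta q]$ and $\Gamma[2t+r-\beta q]$ do not land on the running integer values of $t$ and $r$ in the admissible ranges of $q$ and $\beta$, and that the resulting $\zeta_{t,r}$ keeps the full series (\ref{Affine2}) convergent. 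Handling the noninteger exponent in $(-\beta)^{t+\beta K(N-1)/2}$ consistently with this continuation is the delicate bookkeeping step.
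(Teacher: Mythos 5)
Your proposal is correct and follows essentially the same route as the paper, which states the theorem without a detailed proof but relies on exactly this mechanism: the closed-form derivatives $h^{(2t+r)}$ (available in \citet{cldggf:10}) reproduce the kernel up to the constant $h^{(2t+r)}(0)$, so the integral (\ref{intAffine}) collapses to the elementary Gamma and Beta evaluations you give, and the Pearson type II entry is indeed the formal $\beta\mapsto-\beta$ continuation of the type VII one. Your closing caveat --- that for type II the integral (\ref{intAffine}) literally diverges once $2t+r>\beta q+1$, so the stated $\zeta_{t,r}$, including the phase of $(-\beta)^{t+\beta K(N-1)/2}$ when $\beta K(N-1)/2$ is not an integer, must be read as a meromorphic continuation --- is a genuine subtlety that the paper passes over silently.
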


Finally, \citet{dgcl:24b} established a paralellism between the multimatrix variate distributions, studied here, and the multimatricvariate distributions. They share the simplicity that we have addressed for computation of multiple probabilities. Instead of traces, the corresponding determinants can be expanded in terms of zonal polynomials and then a similar computation can be performed for the multimatricvariate distributions termed as Pearson VII, Pearson type II and beta type II in \citet{dgcl:22}.

\section{Application in dynamic molecular docking in SARS-CoV-2}\label{sec:4}

Recently, \citet{dgcl:24a} applied the multimatrix variate distributions in a problem of molecular docking, by finding a new cavity of 241 atoms inside the SARS-CoV-2 main protease for placing the inhibitor N3 of 21 atoms. The algorithm of searching was based on a theorem provided by \citet{rbvc:22} and the main protease of 2387 atoms and the ligand emerged from \citet{jdx:20} and \citet{PDB:20}. A multimatrix setting models dependent sample experiments by a realistic estimation based on non independent likelihoods. In this situation, 56 movements of the rigid ligand N3 was recorded into the new pocket meanwhile the equilibrium is reached by a decreasing Lennard-Jones potential type $6-12$ and $6-10$. Finally, the dependent sample joint distribution was estimated as: 
\begin{equation*}
     \frac{\Gamma_{1}[(a_{0}+ka)m]}{\Gamma_{1}[a_{0}m]\Gamma_{m}^{k}[a]}
  \prod_{i=1}^{k}|\mathbf{F}_{i}|^{a-\frac{m+1}{2}}
\left(1+\displaystyle\sum_{i=1}^{k}\tr\mathbf{F}_{i}\right)^{-(a_{0}+ka)m},
\end{equation*}
where $a_{0}=0.34397, a=0.19735$, $m=3, k=56$, $\mathbf{F}_{i}=\mathbf{T}_{i}'\mathbf{T}_{i}, i=1,\ldots,k$, for $21\times 3$ matrices $\mathbf{T}_{i}$ registering the spatial coordinates of the ligand inside the protease (see \citet{dgcl:24a} for details). We also highlight that the estimation was completed under the full invariant family of spherical distributions. A fact that eliminates the complex problem of previous knowledge and/or fitting of the hidden law. The dependent joint distribution is also crucial here because the molecular docking experiment demands a time dependent calibration which can not reached by the classical (probabilistic independent) likelihood estimation. 

The addressed example of \citet{dgcl:24a} showed that the joint distribution functions based on spherical multimatrix distributions are easily computable and applied to real data. We now advance into a more complex example involving series of zonal polynomials.

The theorem in \citet[Th. Sec. 3.]{rbvc:22} holds only for rigid molecular docking, it means that the ligand is optimally placed in the pocket by rotations and without distortions. However, molecular biology states that the interaction of one inhibitor inside an active site is dynamic, forcing that the ligand could change its shape by suitable deformations. One way of emulating that molecular dynamic consists of computing the probability that the ligand changes its shape according a desirable coupling structure in the protein. The probability can be modeled by the latent roots of certain fixed positive definite matrix representing the active site near the ligand. For the sake of a simple mathematical illustration, and without any biological and expert study, consider the nearest 21 atoms to the optimized ligand (time 56). Figure \ref{fig2} shows both groups of atoms. Let $\mathbf{A}$ the corresponding $3\times 3$ symmetrized matrix of the nearest atoms in the protein. 
\begin{figure}[h]
    \centering
    \includegraphics[width=4cm]{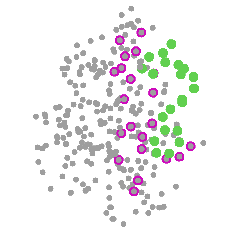}
    \caption{Initial setting of the lowest Lennard-Jones potential rigid ligand (last locus 56, green) and the first 21 nearest atoms (magenta) in the selected cavity on SARS-CoV-2 main protease (grey). The symmetrized optimal ligand is apart 0.24884 in positive definite probability from the symmetrized surrounded neighborhood. It is expected to improve the affinity by deforming the ligand in such way that the $P(\mathbf{0}<\mathbf{F}<\mathbf{A})$ is greater, then the latent roots information about the ligand shape is near to the geometry of the symmetrized target.}
    \label{fig2}
\end{figure}
The latent roots of $\mathbf{A}$ are $A_{1}=0.5864, A_{2}=0.2351, A_{3}=0.1785$, a fact that reflects an embracing neighborhood cavity. Meanwhile, the more stable ligand (highly flat) is represented by the symmetric matrix with latent roots $L_{1}= 0.8663, L_{2}=0.0991, L_{3}=0.0346$.
Thus, by Theorem \ref{probbetaII}, the probability that a symmetrized ligand $\mathbf{F}$ is near to the symmetrized neighborhood $\mathbf{A}$ in terms of definite positivity is given by:
$$P(\mathbf{0}<\mathbf{F}<\mathbf{A})=\hspace{10cm}
  $$
  $$
  \frac{\Gamma_{1}[(a_{0}+ka)m]\Gamma_{m}[(m+1)/2]}{\Gamma_{1}[a_{0}m]\Gamma_{m}[a+(m+1)/2]}|\mathbf{A}_{1}|^{a}\,{}_{1}^{r}P_{1}\left[\left((a_{0}+ka)m\right)_{r}:a;a+(m+1);-\mathbf{A}\right]
  $$
In this case $m=3$, thus the ${}_{1}^{r}P_{1}[\cdot]$ can be written as $$
\sum_{r=0}^{\infty}\frac{\left((a_{0}+ka)m\right)_{r}\,{}_{1}^{r}Q_{1}\left[\left((a_{0}+ka)m\right)_{r}:a;a+(m+1);-\mathbf{A}\right]}{r!},
$$ and ${}_{1}^{r}Q_{1}\left[\cdot\right]$ is just a summation over partitions of 3 parts which can be easily write down, then the computation of the zonal polynomials of only 3 latent roots can be computed by using \citet{gr:78} combined with the recurrence method of \citet{j:68}; or directly by the referred modification of the algorithms for hypergeometric series given by \citet{ke:06}. 
For tracing a path in the probability model, we just sweep the three latent roots $\delta_{1}, \delta_{2}, \delta_{3}$ of a variable $\mathbf{A}$, as a decreasing distance from $L_{1}, L_{2}, L_{2}$, respectively. Explicitly, we take $n=1000$ triples extracted from the sequences $\lambda_{i,j}$ from $0$ to $L_{i}-A_{i}$ by $(L_{i}-A_{i})/n$, $i=1,2,3; j=1,\ldots,n$. Then the upper bound definite matrix $\mathbf{A}_{j}, j=1,\ldots,n$, representing the bent or deformed ligand, have the latent roots $A_{1,j}=L_{1}-\lambda_{1,j}$, $A_{2,j}=L_{2}+\lambda_{2,j}$, $A_{3,j}=L_{3}+\lambda_{3,j},$ respectively. Finally, the 1000 results are depicted in Figure \ref{fig3}, they are increasing probabilities from $\mathbf{A_{1}}=\mathbf{F_{56}}$, which is the rigid optimized ligand in the last time (56) of the dependent sample, to $\mathbf{A_{1000}}$ with the specified latent roots $A_{1}, A_{2}, A_{3}$ corresponding to the target  neighborhood in the protein.  
The first probability is $P(\mathbf{0}<\mathbf{F}<\mathbf{A}_{1})=0.24884$ which is less than the associated probability $P(\mathbf{0}<\mathbf{F}<\mathbf{A}_{1000})=0.37276$. 

\begin{figure}[htp]
    \centering
    \includegraphics[width=6cm]{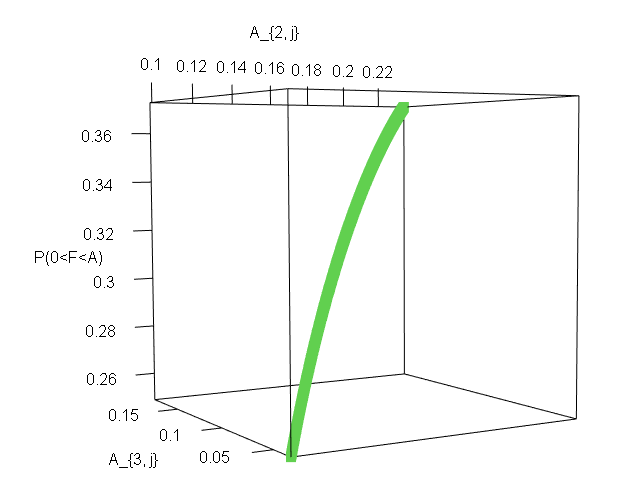}
    \caption{The probability path of 1000 transitions from the symmetrized optmized rigid and flat ligand to a positive definite close distance of symmetrized 21 surrounding atoms in a cavity found on SARS-CoV-2 main protease. The trayectory starts in $P(\mathbf{0}<\mathbf{F}<\mathbf{A}_{1})=0.24884$, where the ligand is flat and non-deformed, then by suitable increments of its second and third symmetrized latent roots, and decrement of the first, the probability is increased until $P(\mathbf{0}<\mathbf{F}<\mathbf{A}_{1000})=0.37276$. In the last stage the symetrized ligand reaches approximate near latent roots of the symmetrized neigborhood with greater probability, a fact that describes a plausible molecular docking by a non flat rigid coupling.}
    \label{fig3}
\end{figure}

Then a distorted symmetrized ligand getting more similar in latent roots to the symmetrized target in the pocket appear with greater probability, a fact that can be studied in future as a punctuation of an effective molecular docking under non rigid movements and a plausible calibration with the Lennard-Jones potential. 

Finally, the above technique can be extended to two o more probabilities on Theorem \ref{probbetaII}, it just needs to compute ${}_{1}^{r}Q_{i}\left[\cdot\right]$ as weights of the corresponding coefficient of the $i-th$ summation.   

\section{Conclusions}
This work revises the problem of probability computations on cones in matrix variate distributions and applies the discussion in a several situations. In particular, the probabilities of multimatrix variate distributions are set in terms of computable series of zonal polynomials. Some of the distributions here applied are invariant under the complete family of elliptically countered distributions, they include the termed Pearson II, Pearson VII. The non invariant case is also considered in the generalised multimatrix variate Wishart distributions. This case promotes a simplification of a classical kernel integral in elliptically contoured distributions which was applied in statistical shape affine distributions under real normed division algebras. Finally, the study of probabilities on cones can be applied in some meaningful situations by understanding the positive definiteness probability in a context of dynamic molecular docking in SARS-CoV-2. 



\end{document}